\documentclass[12pt]{amsart}
\usepackage{latexsym,fancyhdr,amssymb,color,amsmath,amsthm,graphicx,listings,comment}
\usepackage[section]{placeins}
\pagestyle{fancy}
\newtheorem{thm}{Theorem} 
 
\newtheorem{definition}{Definition} 
\newtheorem{conj}{Conjecture} 
\newtheorem{coro}{Corollary}
\setlength{\parindent}{0cm}
\let\paragraph\subsection

\title{Density of wave fronts}
\fancyhead{}
\fancyhead[LO]{\fontsize{9}{9} \selectfont KANG AND KNILL}
\fancyhead[LE]{\fontsize{9}{9} \selectfont WAVE FRONTS}

\setlength{\parindent}{0cm} \setlength{\topmargin}{-1.0cm} \setlength{\headheight}{0.5cm} \setlength{\textheight}{23cm}
\setlength{\oddsidemargin}{0cm} \setlength{\evensidemargin}{0.0cm} \setlength{\textwidth}{17.0cm}

\author{Emily Kang and Oliver Knill}
\date{Jan 26, 2025, Aug 23, 2025, Dec 22, 2025, Jan 12, revised }
\address{Department of Mathematics \\ Harvard University \\ Cambridge, MA, 02138 }
\subjclass{}

\keywords{Wave fronts, Geodesics, Billiards}


\begin{document}
\maketitle

\begin{abstract}
We prove that wave fronts on a flat torus become dense.
As a corollary, wave fronts become dense for a square billiard and for the geodesic flow on the
flat Klein bottle and for the surface of a three dimensional cube.
\end{abstract} 

\section{Motivating questions}

\paragraph{}
Imagine a square room with mirror lining the inner walls. Light
emerges from a lamp. The wave front initially forms a perfect circle.
After reflecting at the mirror walls, it will bounce back, continuing
to be piecewise smooth curve at any time. What happens in the long term
with this wave front? Does it fill out the room more and more densely? 
How long do we have to wait so that every ball of a given fixed radius has
and remains to have a non-empty intersection with this curve?

\paragraph{}
An earth quake has its epicenter at a point on a closed surface. 
What is the fate of the resulting tsunami wave front emanating from this point?
On a round sphere or round projective plane, this wave will stay circular and 
oscillate forth and back periodically. This situation is obviously very special 
on a perfect sphere. It changes if the metric on the sphere is altered in any way.
What happens for a general surface like an ellipsoid or a torus, surfaces that lack
this return property? What happens with a light front radiating from a point 
on the surface of a cube? 

\paragraph{}
As a third example, imagine a duck in a circular pond, making waves. These waves
first form circles around the duck, then bounce off the shore and come back. 
What happens in the long term? If the duck was in the center of the pond, and assuming
ideal conditions, the wave front would reach the shore, reflect there and return back to the duck. 
Assuming no friction or diffusion, the wave front would oscillates back and forth
between center and boundary. What would happen if the duck was located at a point off the center point of the pond? 
Would the wave front produced by the duck become dense on the pond's surface? Intuition says yes
because the length of the wave front grows \cite{Vicente2020}. It gets more complicated as
it has to remain wrapped-up in the pond.

\paragraph{}
These pictures illustrate some rather general {\bf wave front problems} 
in {\bf Riemannian geometry}. One can ask for which Riemannian manifolds 
(with or without boundary or on which polytopes) do the wave front become dense 
for all initial points, like in the mirrored room example?
In which cases does the wave front become dense for some points and not become dense for others
like in the duck example case? In which cases does it not become dense for all initial points,
like in the tsunami case? 

\paragraph{}
We explore this theme here in the torus, square and cube surface case, where the metric is flat
and where the question can be analyzed with elementary tools only. While the subject could reach 
far into differential geometry, all we really need here is some single variable calculus. 

\begin{figure}[!htpb]
\scalebox{0.64}{\includegraphics{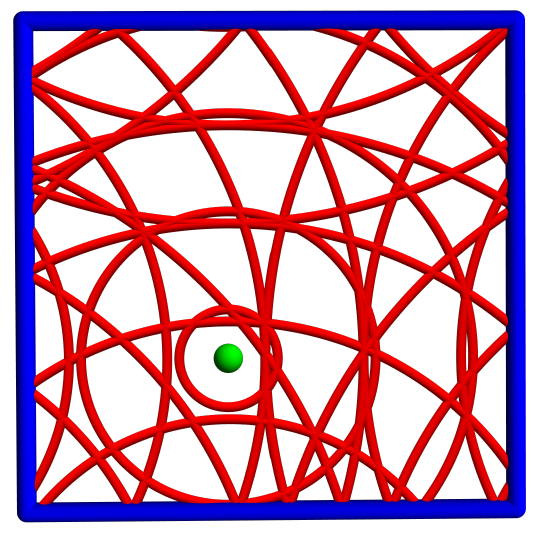}}
\scalebox{0.64}{\includegraphics{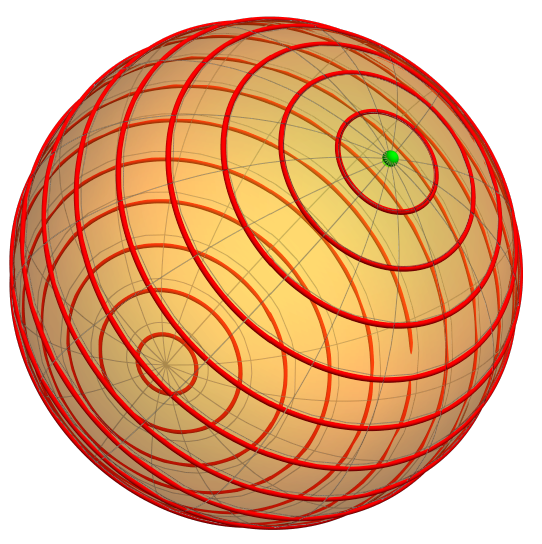}}
\scalebox{0.28}{\includegraphics{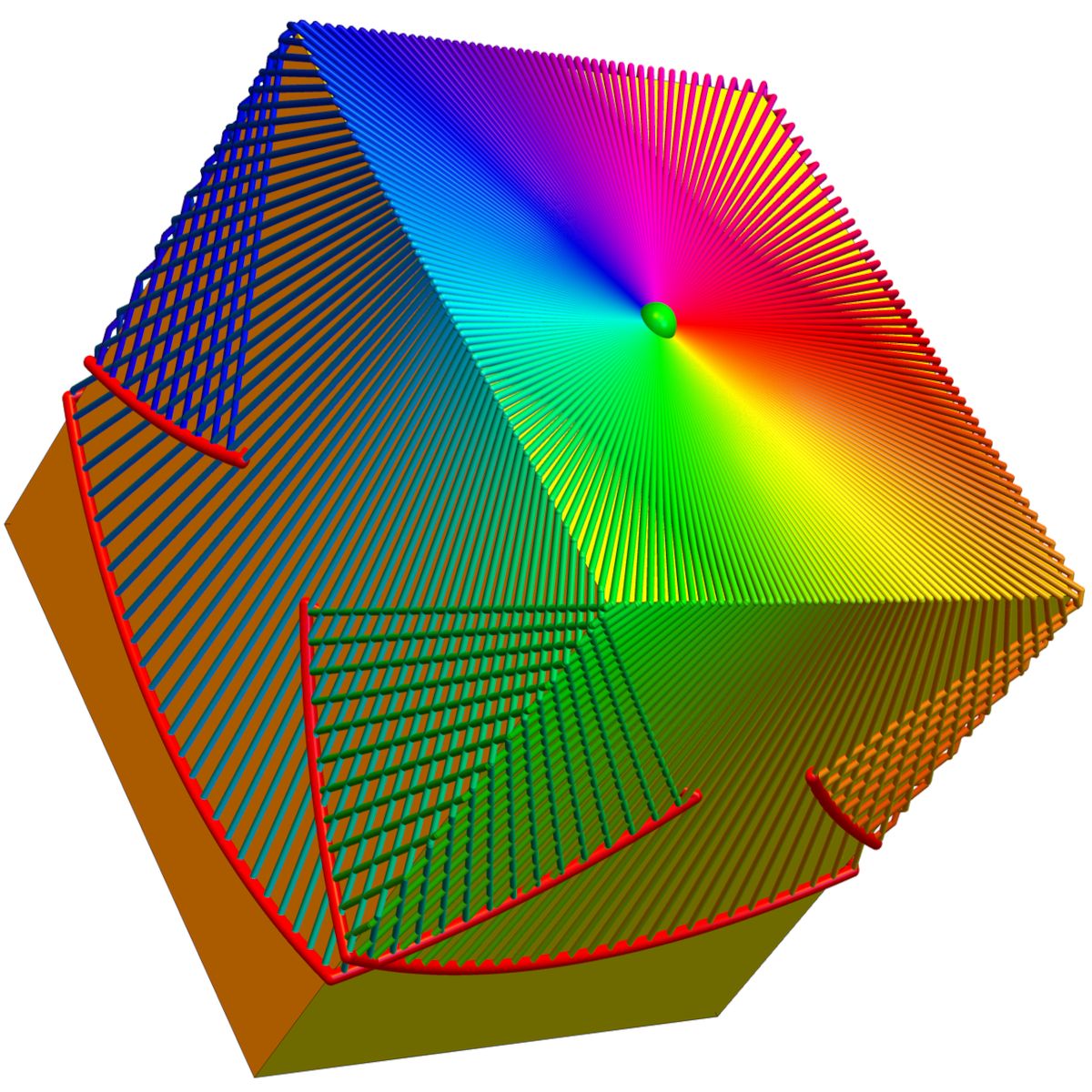}}
\scalebox{0.64}{\includegraphics{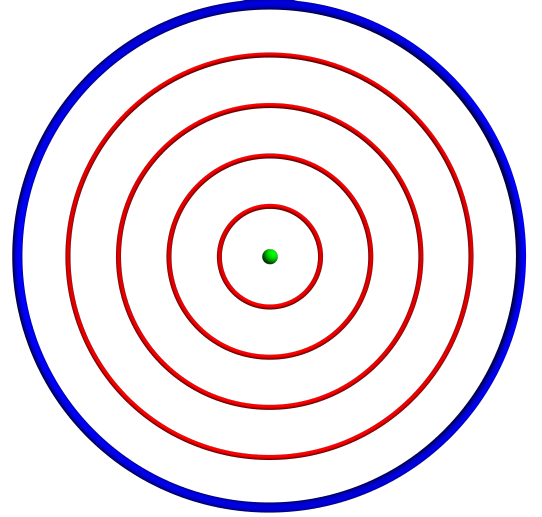}}
\label{Motivation}
\caption{
What happens with a light wave front in a room of mirrors, a 
tsunami on a perfect sphere or a cube, and the wave front produced
by a duck in a circular pond?
}
\end{figure}  

\section{Background}

\paragraph{}
We begin with reviewing some elementary notions in differential geometry
as covered in standard textbooks like \cite{AMR, BergerGostiaux, DoCarmo1992,Kuehnel2015}.
A good overview of the subject is Chapter 10 in \cite{BergerPanorama}.
For manifolds with boundaries, the geodesic dynamics, often referred to as a
{\bf billiard}, has become a research topic on its own right 
\cite{KozlovTreshchev,KatokStrelcyn,Tab95,ChernovMarkarian}.
A reference for wave fronts in Riemannian manifolds with boundary is \cite{AlexanderBergBishop}.
Motivated by the questions posed in the previous section we work in three different frameworks. 

\paragraph{}
The first setting involves a compact Riemannian manifold $(M,g)$, where $M$ is a smooth manifold equipped
with a positive definite symmetric $(2,0)$ tensor field $g$. At each point $P \in M$, the bilinear form 
$g(P)$ defines an inner product $\langle v,w \rangle_P = g(P)(v,w)$ in the tangent space $T_PM$.                    
This inner product introduces a notion of length $||v||_p = \sqrt{\langle v,v \rangle}_p$,
which, in turn, allows us to define the {\bf arc length} of a smooth curve      
$x:[a,b] \to M$ as $\int_a^b ||\dot{x}(t)|| \; dt$, where $\dot{x} \in T_xM$ is the velocity vector.
By the fundamental theorem of Riemannian geometry, there is a unique torsion-free connection
on $(M,g)$. This covariant derivative is in a local basis $\{ e_1, \dots,e_d \}$ at $P$ determined by the
Christoffel symbols $\nabla_{e_i} e_j = \sum_k \Gamma_{ij}^k e_k$. The geodesic flow can then be defined
as the solution to the differential equation 
$\ddot{x}^k + \sum_{i,j=1}^d \Gamma_{ij}^k(x(t)) \dot{x}^i \dot{x}^j = 0$.
Given an initial point $P=x(0)$ and an initial velocity $\dot{x}(0)$, there is a unique solution curve
$x(t)$ to these equations. 
If $M$ is compact, it is also a complete metric space and the Hopf-Rynov theorem assures      
global existence of solutions.

\paragraph{}
The second framework extends to compact Riemannian manifold $(M,g)$ with boundary $\delta M$. Here,
the geodesic flow, referred to as a {\bf billiard}, requires distinguishing 
between the motion in the interior of $M$ and reflections at the boundary.                                
The boundary $N=\delta M$ is assumed to be a smooth manifold of co-dimension $1$ with the metric $g$
inducing a Riemannian metric $h$ on $\delta M$.
This is achieved by restricting $g$ to the hyper tangent plane $T_PN \subset T_PM$. 
If a geodesic $x(t)$ starts at the boundary, it can continue as a boundary geodesic or enter the 
interior of $M$. A geodesic $x(t)$ starting in the interior hits the boundary at some point           
$P \in \delta M$, the incoming  vector $x'(t^-) \in T_PM$ 
is reflected at the boundary to produce an out-going vector $x'(t^+)$, analogous
to a billiard ball rebounding off a table edge or a light ray reflecting off a mirror. 
In non-convex scenarios, like the Sinai billiard, where a circular obstacle is placed in 
a flat Clifford 2-torus, the return map to the boundary may already
exhibit non-smooth behavior. After each reflection, the geodesic flow resumes in the                                        
interior until the next impact point. If a geodesics hits a corner, then it is terminated.

\paragraph{}
The third framework considers {\bf piecewise smooth Riemannian polytops},
a concept that was historically challenging to define \cite{Richeson,lakatos}.
This difficulty has led either to restricting the analysis to convex polytopes 
\cite{gruenbaum,Ziegler} or then to use a broader definition, where a polytop $M$ is 
viewed as the geometric realization of a finite abstract simplicial complex $G$, 
a finite set of non-empty sets closed under the operation
of taking non-empty subsets \cite{DehnHeegaard}.
A {\bf piecewise smooth Riemannian polytop} is such a polytop that is equipped
with a Riemannian metric $g$ that is smooth except on a subset of co-dimension $2$.                           
Inductively, this means $M$ is a smooth manifold (with or                  
without boundary) for which the tensor $g$ is defined on an open subset
$U = M \setminus K$, where $K$ is a piecewise smooth Riemannian polytope of co-dimension $2$.
The geodesic flow is defined for geodesic paths avoiding $K$. One of the simplest examples is
the square table, a $2$-manifold with circular boundary and a metric that is discontinuous 
at its four corners. For a $3$-dimensional example, consider a billiard in a solid cube.
Here, the Riemannian metric tensor $g$ may be discontinuous along the edges and vertices that
form a 1-dimensional network. 

\section{Wave fronts} 

\paragraph{}
{\bf Wave fronts} can be defined and observed on any {\bf piecewise smooth 
Riemannian manifold $M$ with or without boundary}. 
The {\bf wave front} of a point $P$ is defined as the hyper surface $W_t = \exp_P(S_t)$,
where $\exp_P: T_PM \to M$ is the {\bf exponential map} and $S_t=\{ x \in \mathbb{R}^d, |x|=t\}$ 
is the sphere of radius $t$ in the tangent space $T_PM \sim \mathbb{R}^d$. The global existence
of the exponential map on a closed smooth Riemannian manifold is guaranteed by the Hopf-Rynov theorem. 
In the case of a manifold with boundary, the geodesic flow dynamics is also known as the
{\bf billiard dynamical system}. This is already interesting for a 2-dimensional domain and especially 
on a convex domains, in which case the system is called the {\bf Birkhoff billiard} \cite{Birkhoff,Por50}.  
Already in a polygon, like in a square, a path entering one of the corners does not have a well
defined bounce, but the wave front $W_t$ remains defined for all times.
Similarly, when looking at a wave front on a surface of a polyhedron like a cube, it is not clear how
to continue the geodesic through the corner. We just disregard these trajectories. The wave front
$W_t$ is still defined now as piecewise smooth curve. It will even become disconnected after
finite $t$ in general. In this article, we limit ourselves to
\begin{itemize}
\item 1. Smooth closed manifolds. In this case, $W_t$ always are continuous curves, which are piecewise 
smooth with singularities at caustic points, points where the Jacobian $d \exp_P$ is singular. 
\item 2. Convex planar flat billiards. Now, $W_t$ remains a continuous curve. It remains piecewise smooth. 
Differentiability can fail at caustics or at the boundary.
\item 3. Piecewise flat convex polyhedra. In this case, $W_t$ in general can already 
become disconnected.
\end{itemize}

\begin{figure}[!htpb]
\scalebox{0.15}{\includegraphics{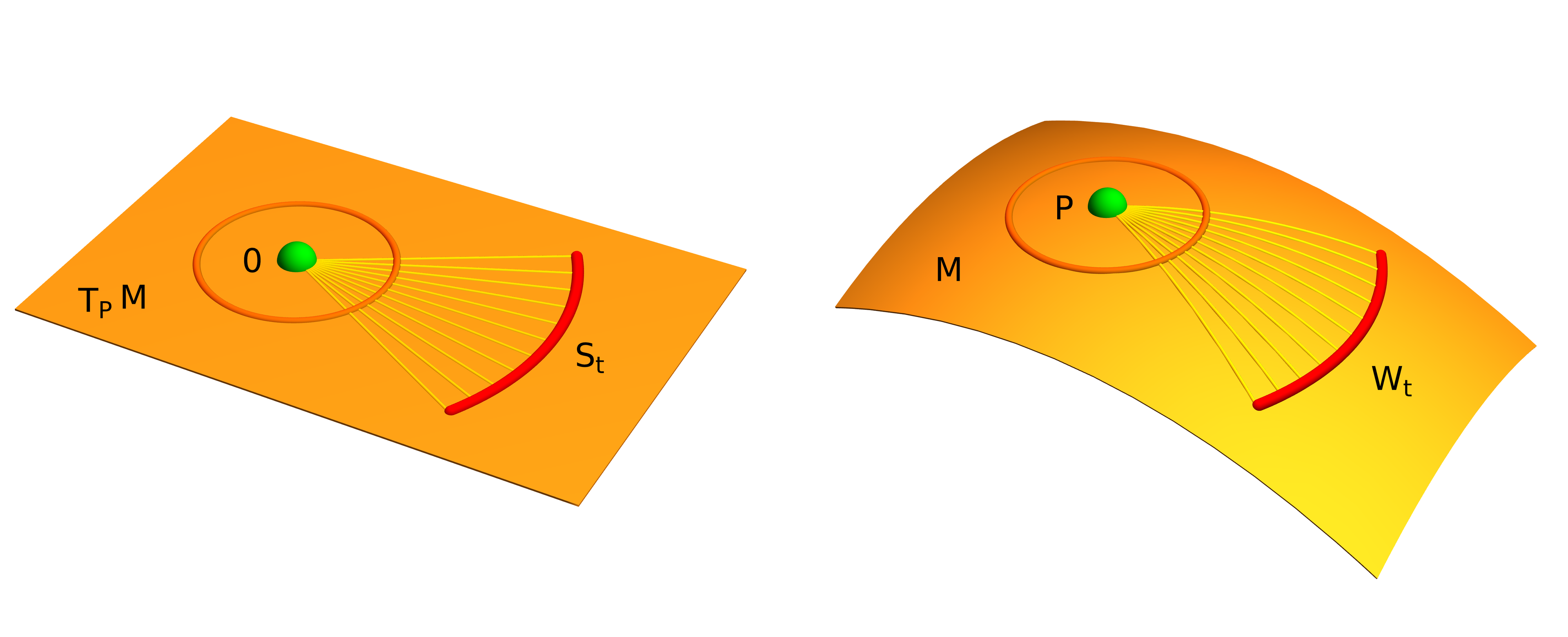}}
\label{Exponential map}
\caption{
In a Riemannian d-manifold $M$, the wave front $W_t(P)$ of a point $P$ for which a tangent space
$T_PM$ exists, is the image $\exp_P(S_t)$ of the geodesic sphere 
$S_t=\{ |x|=t\} \subset T_PM \sim \mathbb{R}^d$ of radius $t$.
}
\end{figure}

\paragraph{}
While for a smooth surface $M$ without boundary or a billiard in a convex region,
the wave fronts $W_t$ are piecewise smooth and in particular continuous. This is already different
on a closed polyhedral surface. The wave front there can become disconnected 
after some time. This happens in particular on a  cube's surface, specifically the surface bounding 
the unit cube. Numerical experiments still indicate to us that also for more general non-flat 
compact surfaces, in general, the wave front becomes dense, meaning that it fills
out the surface more and more. An unresolved case is already the dodecahedron. 

\paragraph{}
Here is the definition of ``dense wave front" 
for a general compact Riemannian manifold $M$ with or without boundary.
Given $r>0$, the {\bf open ball} $B_r(Q)$ of a point $Q \in M$ is the set of points $P$ for 
which there is a geodesic from $P$ to $Q$ that has geodesic length smaller than $r$. 

\begin{definition}[Dense Wave Front]
\label{dense}
We say that the wave front $W_t(P)$ {\bf becomes dense} on $M$ if for every open 
ball $U = B_r(Q)$ in $M$ with $r>0$, there is a time 
$\tau(P,r)$ such that for all $t>\tau(P,r)$, the wave front 
$W_t(P)$ intersects $U$ in a non-empty set.
\end{definition}

\paragraph{}
To rephrase this, we can also say that a wave front $W_t(P)$ becomes dense if for every 
$\epsilon>0$ there is a time $\tau(P,\epsilon)$ such that for $t>\tau(P,\epsilon)$ the
$\epsilon$-neighborhood of $W_t(P)$ covers the entire manifold. 

\paragraph{} Since the $(d-1)$-dimensional volume of the co-dimension-one wave front $W_t$ is
expected to grow and the manifold $M$ is finite, we expect it to fill up
$M$ more and more. The term ``generic" here is used in an informal way,
but it could be made precise using the notion of {\bf Baire generic}. The set of 
Riemannian manifolds of a fixed dimension naturally can be made into a metric 
space by Nash embedding them into a large Euclidean space $E$. A metric on 
Riemannian manifolds can then be defined by taking the smallest
possible Hausdorff distance between any embeddings $M_1,M_2$ in $E$. Alternatively
one could fix a topological manifold $M$ and define a distance between 
Riemannian metrics on $M$, using this topology on metrics to define Baire genericity.

\begin{conj}
For a generic compact $d \geq 2$-dimensional Riemannian manifold with or without boundary,
the wave front $W_t(P)$ becomes dense for all points $P \in M$. 
\end{conj}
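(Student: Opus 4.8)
The natural strategy is a Baire category argument in the space $\mathcal{G}$ of smooth Riemannian metrics, topologized as in the discussion above (via Nash embeddings, or, with the underlying manifold fixed, via the $C^\infty$ topology on metrics; in the boundary and polytope cases one simultaneously varies the smooth boundary hypersurface). The elementary point is that for a fixed time $t$, base point $P$, and open ball $B_r(Q)$, the condition ``$W_t(P)\cap B_r(Q)\neq\emptyset$'' is open in the pair $(g,P)$, since $\exp_P$ depends continuously on $g$ and $P$ while $B_r(Q)$ is open; and because geodesics move at unit speed, $W_s(P)$ stays within $|s-t|$ of $W_t(P)$, so the requirement ``$W_t(P)\cap B_r(Q)\neq\emptyset$ for all $t>\tau$'' can be tested on a countable net of times at the cost of shrinking $r$ slightly. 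Intersecting over a countable basis of balls $B_r(Q)$ and a countable dense set of base points $P$, and using that ``$r$-dense near $P$'' is stable under a small motion of $P$, reduces the Conjecture to showing that a single ``anti-clustering'' event is generic and then exhausting countably many such events.

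\paragraph{}
The geometric core is a dichotomy already visible on the round sphere, the very obstruction the word ``generic'' must remove: the wave front fails to become dense exactly when the geodesics issued from $P$ refocus, so that $\exp_P(S_t)$ collapses to a set of small diameter at arbitrarily large $t$. One must therefore establish, for a residual set of metrics, an \emph{anti-focusing} property together with a \emph{spreading} property. For anti-focusing I would prove a local perturbation lemma: given $g$, a point $P$, a large time $t_0$, and a ball $B_r(Q)$ disjoint from $W_{t_0}(P)$, one can make an arbitrarily small change of $g$, supported in a tiny ball meeting $W_{t_0}(P)$, that bends one sheet of the front into $B_r(Q)$; iterating along a countable exhaustion then yields a dense $G_\delta$ of metrics for which no front avoids any fixed ball forever. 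For spreading I would combine a growth input---the $(d-1)$-volume of $W_t(P)$, counted with multiplicity, equals $\int_{S_PM} J(t,v)\,dv$ with $J$ the Jacobian of $v\mapsto\exp_P(tv)$, and one wants this to tend to infinity for generic $g$, conjugate points being generically isolated---with a transitivity input, namely that the geodesic flow on $SM$ is topologically transitive, so the orbit of the fiber $S_PM$ cannot be trapped away from any open set; in the ideal form one would prove that the normalized $(d-1)$-volume on $W_t(P)$ equidistributes toward the Riemannian volume as $t\to\infty$.

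\paragraph{}
The hard part, and the reason the statement is only a conjecture, is this spreading input: topological transitivity---let alone equidistribution---of \emph{generic} geodesic flows is a well-known open problem, the $C^\infty$ closing and connecting lemmas being unavailable. The most one can currently leverage are partial results on the generic structure of surface geodesic flows (non-degeneracy of closed geodesics after Klingenberg--Takens, positive topological entropy, homoclinic phenomena), while the flat examples of this paper---the torus, the square billiard, the Klein bottle, the cube surface---furnish unconditional instances in which the growth-and-equidistribution mechanism is carried out by elementary means. A secondary obstacle is the passage from a countable dense set of base points to \emph{all} $P\in M$: the threshold $\tau(P,r)$ need not be upper semicontinuous, so one must show that for fixed $g$, $Q$, $r$ the set of bad base points is meager in $M$, and then run a second Baire argument inside $M$. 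I expect the scheme to go through for the flat models treated here and, conditionally, for $C^2$-generic surface metrics, but to remain genuinely open in the generality stated---especially for $d\geq 3$ and in the presence of boundary, where perturbing the metric also perturbs the reflection law.
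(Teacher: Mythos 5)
The statement you are addressing appears in the paper as a \emph{conjecture}; the paper offers no proof of it, so there is nothing to compare your argument against except the surrounding discussion, which is consistent with your framing (Baire genericity via Nash embeddings or a topology on the space of metrics). Your text is candid that it is a strategy rather than a proof, and that candor is warranted: beyond the inputs you correctly flag as open (topological transitivity or equidistribution for generic geodesic flows, the meagerness of bad base points, the boundary case where the reflection law moves with the metric), at least one step as written would fail even if those inputs were granted.

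The step that fails is the category-theoretic skeleton itself. Definition~\ref{dense} requires $W_t(P)\cap B_r(Q)\neq\emptyset$ for \emph{all} $t>\tau$, so for fixed $P$, $Q$, $r$ the good set of metrics has the form $\bigcup_{\tau\in\mathbb{N}}\bigcap_{t>\tau}O_t$ with each $O_t$ open. This is a countable union of $G_\delta$ sets, and Baire category gives you nothing unless you show that one of the inner intersections $\bigcap_{t>\tau}O_t$ is itself residual, i.e.\ that a residual set of metrics keeps the front meeting the ball for all large times simultaneously. Your anti-focusing perturbation lemma only bends one sheet of the front into $B_r(Q)$ at a single time $t_0$; it says nothing about $t>t_0$, and iterating it over a countable net of times requires infinitely many perturbations whose supports and sizes must be controlled at once for a single limiting metric -- no mechanism for that is given. (Contrast the paper's flat examples, where density for all large $t$ is obtained by an explicit computation in the universal cover showing a fixed piece of the front is $3/\sqrt{t}$-dense, not by any genericity or perturbation argument.) So the proposal is a reasonable research outline, but it is not a proof, and the conjecture remains open exactly where you say it does.
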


\paragraph{}
For manifolds $M$ for which all geodesics are closed \cite{Besse} like the round sphere (or in a generalized sense 
the projective space), the wave front $W_t(P)$ is either a point or a circle 
for all $t \geq 0$. Any round sphere
or projective space of constant curvature in any dimension has this property.
But this is an exceptional case as already Blaschke realized in 1921 \cite{blaschke}.
It is natural to conjecture that for a generic $2$-manifold $M$ with or
without boundary, every wave front $W_t(P)$ {\bf becomes dense} in the sense 
of Definition~\ref{dense}.

\section{Surfaces} 

\paragraph{}
We confirm here that one can have dense wave fronts even in very simple
situations, despite the fact that having a dense wave front displays some sort of mixing
property. This can happen even in the case when the geodesic flow is integrable. 
We look first at the {\bf flat 2-torus} $M=\mathbb{R}^2/\mathbb{Z}^2$. 
It is a $2$-dimensional compact Riemannian manifold with Euclidean flat metric, 
induced from the plane. 

\begin{thm}
On a flat 2-torus $M = \mathbb{R}^2/\mathbb{Z}^2$, all wave fronts
$W_t(P)$ become dense. 
\end{thm}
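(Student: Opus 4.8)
The plan is to reduce the theorem to an elementary statement about large circles and the integer lattice, and then to settle that by a one–dimensional covering (pigeonhole) argument, using only single–variable calculus. First, unwind the exponential map: on $M=\mathbb{R}^2/\mathbb{Z}^2$ the geodesics are projections of straight lines, so for a lift $\tilde P$ of $P$ the exponential map $\exp_P\colon T_PM\cong\mathbb{R}^2\to M$ is just $v\mapsto\pi(\tilde P+v)$, where $\pi\colon\mathbb{R}^2\to M$ is the quotient map. Hence $W_t(P)=\pi(\tilde P+S_t)=\pi\bigl(\partial B_t(\tilde P)\bigr)$ is the image under $\pi$ of the Euclidean circle $C_t$ of radius $t$ about $\tilde P$. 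Since $\pi$ commutes with translations we may take $P=0$. A point $x\in\mathbb{R}^2$ has Euclidean distance $\bigl|\,|x|-t\,\bigr|$ from $C_t$, so by Definition~\ref{dense} (and since it suffices to treat small $r$) the theorem reduces to: for every $Q\in M$ and every $r\in(0,1)$ there is $\tau(r)$ such that for all $t>\tau(r)$ the circle $C_t=\partial B_t(0)$ comes within Euclidean distance $r$ of some lattice translate $Q+n$, $n\in\mathbb{Z}^2$.

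Next, slice by the lattice lines. Write $Q=(Q_1,Q_2)$ with $Q_2\in[0,1)$ and put $f(s)=\sqrt{t^2-s^2}$ for $|s|\le t$. The circle $C_t$ meets the horizontal line $y=Q_2+m$ (whenever $|Q_2+m|<t$) exactly at the two points $(\pm f(Q_2+m),\,Q_2+m)$, and the point $(f(Q_2+m),\,Q_2+m)$ lies within Euclidean distance $r$ of $Q+(k,m)$ as soon as $|f(Q_2+m)-(Q_1+k)|<r$. So it is enough to find an integer $m$ with $0\le Q_2+m\le\delta t$, for a small $\delta=\delta(r)$ to be chosen, such that $f(Q_2+m)$ lies within $r$ of $Q_1$ modulo $1$.

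The decisive point is that the top of a huge circle is almost flat. For $0\le s\le\delta t$, assuming $\delta t\ge1$ and $\delta\le\tfrac14$, one has
\[
 f(s)-f(s+1)=\frac{2s+1}{\sqrt{t^2-s^2}+\sqrt{t^2-(s+1)^2}}\le\frac{3\delta t}{2\sqrt{t^2-4\delta^2t^2}}=\frac{3\delta}{2\sqrt{1-4\delta^2}},
\]
which is $<r$ once $\delta$ is small in terms of $r$ (for example $\delta=\min\{\tfrac14,\tfrac r2\}$). On the other hand $f(0)-f(\delta t)=t\bigl(1-\sqrt{1-\delta^2}\bigr)\to\infty$ as $t\to\infty$. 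Hence the finite strictly decreasing list $a_m:=f(Q_2+m)$, ranging over all integers $m$ with $Q_2+m\in[0,\delta t]$, steps down by amounts $<r<1$, while $\max_m a_m-\min_m a_m$ grows with $t$ (it is at least $f(1)-f(\delta t-1)$, hence $>1$ once $t>\tau(r)$ for a suitable threshold). A decreasing list with step size $<r$ that spans an interval of length $>1$ must meet the $r$-neighbourhood of $Q_1+\mathbb{Z}$, so $|a_m-(Q_1+k)|<r$ for some integers $m,k$. Then $(a_m,Q_2+m)\in C_t$ is within Euclidean distance $r$ of $Q+(k,m)\in Q+\mathbb{Z}^2$, so $W_t(0)=\pi(C_t)$ intersects $B_r(Q)$; as this holds for every $t>\tau(r)$, the theorem follows.

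I expect no substantial obstacle: the real content is the soft fact that a circle of huge radius is, near its top, so nearly horizontal that the unit–spaced lattice lines cut it in points whose remaining coordinates form a net finer than any prescribed $r$. The only care needed is (i) the uniformity of the Lipschitz–type bound on $f(s)-f(s+1)$ across the whole window $[0,\delta t]$; (ii) checking that enough integers $m$ fall into $[0,\delta t]$, which is automatic once $\delta t\ge2$ and is already forced by the span exceeding $1$; and (iii) recording $\tau(r)$ explicitly from the inequality $t(1-\sqrt{1-\delta^2})>1$. One could instead obtain the reduced statement of the first step from counting lattice points in thin annuli (a Gauss circle problem estimate), but the covering argument above uses no number theory and matches the ``single variable calculus'' spirit announced in the paper.
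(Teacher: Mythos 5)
Your proof is correct and follows essentially the same route as the paper: both pass to the universal cover, observe that the wave front there is the Euclidean circle of radius $t$, exploit that this circle is nearly flat over a suitable window so that unit steps change the graph $f(s)=\sqrt{t^2-s^2}$ by an arbitrarily small amount, and conclude by a covering argument modulo $\mathbb{Z}^2$. The only difference is organizational --- the paper projects a fixed rectangular window of width $\sim\sqrt{t}$ and height $1$ and reads off an explicit $3/\sqrt{t}$-density rate, whereas your pigeonhole on the intersections with the lattice lines through $Q$ gives density with a threshold $\tau(r)$ but no rate improving in $t$ --- while the key estimate and mechanism coincide.
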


\paragraph{}
We prove a stronger result which states that for a specific arc
$A=\{ (\cos(\theta), \sin(\theta)), \theta \in [\alpha,\beta] \}$ of positive length, the wave 
front $W_t(P,A) = \exp_P(t A)$ becomes dense. Note that the length of $A$ goes to $0$.
We can think of the exponential map   
restricted to a sector as building a {\bf light cone} of a {\bf headlight}.
If $\alpha=\beta$, we have a {\bf laser}, a wave front going in a single direction. In other
words this is then a single geodesic ray. While for irrational $\alpha/(2\pi)$ such 
a {\bf laser beam} $\{ W_s(P), s \in [0,t] \}$ becomes dense, the {\bf wave front of this 
laser} is just the point $W_t(P)$ for all $t$. 

\paragraph{}
In the universal cover $\mathbb{R}^2$ of $\mathbb{T}^2$, the wave front is a 
round circle of radius $t$. We can assume without loss of generality that the
point $P$ is at the origin. The wave front can be seen as a union of two half circles,
both of which are graphs. The first is the graph of $f(x) = \sqrt{t^2-x^2}$. The second 
is the graph of $g(x)=-\sqrt{t^2-x^2}$. 

To prove the result, we concentrate only on a 
small rectangle $R$ containing part of the graph of $f$. It is chosen to be so wide 
that its height is approximately $1$.  
For given $t>0$, define the region $R=[a,b] \times [f(a),f(b)]$ with
$[a,b]=[-2 \sqrt{t},-\sqrt{2t}]$. The graph of $W_t(P)$ intersected with 
$R$ crosses $R$ monotonically from $(a,f(a))$ to $(b,f(b))$. Since the graph of $f$
is concave down, the derivative of $f$ is in the interval $[\sqrt{2}/\sqrt{t-2},2/\sqrt{t-4}]$
which is for $t>36/5$ smaller than $3/\sqrt{t}$. 
This implies that $|f(u+1)-f(u)| \leq 3/\sqrt{t}$ for $u \in I$.  \\

If $R$ is projected to $[0,1) \times [0,1)$ using
$\pi(x,y) = (x \; {\rm mod} 1, y \; {\rm mod} 1)$, 
then for every $P=(x,y) \in [0,1) \times [0,1)$, there exists $u \in [a,b]$ such that
$||\pi(u,\sqrt{t^2-u^2}) - (x,y)|| \leq 3/\sqrt{t}$. 
Proof: Look at all the $u \in [a,b]$ such that $u \; {\rm mod} \; 1 = x$. 
As $|f(u+1)-f(u|) \leq 3/\sqrt{t}$, one of the values $u \in \pi^{-1}(x)$  
must satisfy $|f(u)-y| \leq 3/\sqrt{t}$ so that 
$||\pi(u,\sqrt{t^2-u^2}) -(x,y)|| = \sqrt{0^2 + |f(u)-y|^2} \leq 3/\sqrt{t}$. 

We have shown that even a tiny part $W_t(P) \cap R$ of the entire wave front has become 
$3/\sqrt{t}$ dense. So, also the entire wave front is $3/\sqrt{t}$ dense. 

\begin{figure}[!htpb]
\scalebox{0.63}{\includegraphics{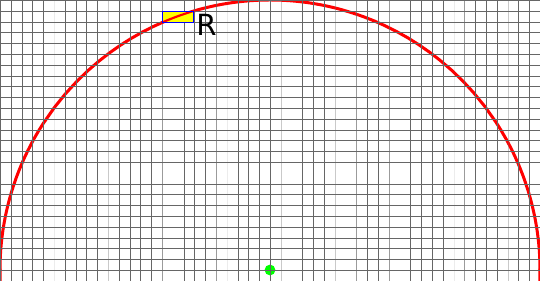}}
\scalebox{0.66}{\includegraphics{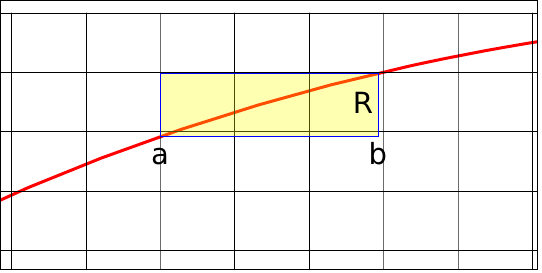}}
\scalebox{0.33}{\includegraphics{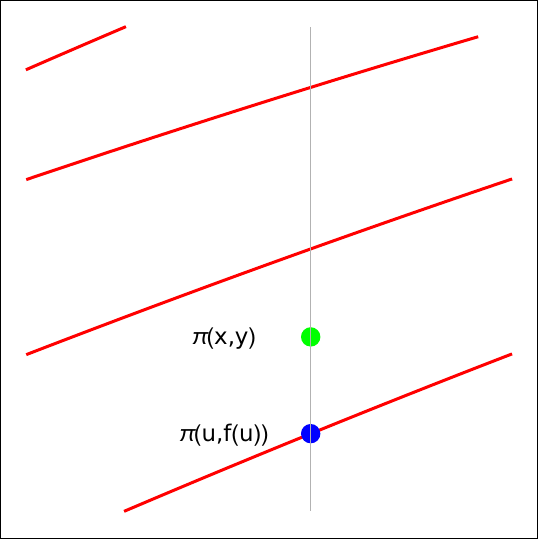}}
\label{Proof}
\caption{
To prove the result, we project a small part $R=[a,b] \times [f(a),f(b)]$ 
to the torus. This restriction to a smaller interval at first seems to make
the problem harder. But it allows to isolate part $W_t \cap R$ of the wave front $W_t$
for which $W_t$ is a graph that is asymptotically linear for $t \to \infty$.
It is then easier to see that $(W_t \cap R)/\mathbb{Z}^2$ becomes dense. 
}
\end{figure}

\paragraph{}
The flat torus $S_1 \times S_1$ be made more asymmetric by using 
$S_r \times S_s$ where $S_r,S_s$ are circles of radius $r$ or $s$. 
This could be realized by a conformal distortion of the Riemannian metric. 
It still is flat. We just need to scale the region and scale the maps accordingly 
in the proof to see that also on such asymmetric tori, the wave fronts become dense. 
A similar argument works for the {\bf flat Klein bottle} which has a flat 2-torus 
as a 2:1 cover. Both the flat Klein bottle as well as the flat 2-torus (called
Clifford torus) could be embedded isometrically in $\mathbb{R}^4$ but not in 
$\mathbb{R}^3$. 

\begin{thm}
On a flat Klein bottle $M$, all the wave fronts $W_t(P)$ become dense.
\end{thm}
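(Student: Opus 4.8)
The strategy I would follow is to reduce the Klein bottle case directly to the torus case already proved, using the covering map. The flat Klein bottle $K$ is the quotient of the flat torus $\mathbb{T}^2 = \mathbb{R}^2/\mathbb{Z}^2$ by an isometric involution $\sigma$ (the glide reflection), so that $\pi : \mathbb{T}^2 \to K$ is a local isometry and a $2{:}1$ covering. Equivalently, $K = \mathbb{R}^2/\Gamma$ where $\Gamma$ is the discrete group generated by one translation and one glide reflection. Because $\pi$ is a local isometry, geodesics upstairs map to geodesics downstairs, and the exponential map commutes with $\pi$: if $\widetilde{P} \in \mathbb{T}^2$ is a lift of $P \in K$, then $W_t^{K}(P) = \pi\bigl(W_t^{\mathbb{T}^2}(\widetilde{P})\bigr)$, since $\exp_P^{K} = \pi \circ \exp_{\widetilde P}^{\mathbb{T}^2}$ on $T_{\widetilde P}$. (One must note there are no caustic or boundary subtleties here: the flat Klein bottle is a smooth closed flat surface, so $\exp$ is globally defined and the wave front is an immersed circle for all $t$, exactly as on the torus.)

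With this in hand, the proof is short. Let $U = B_r(Q)$ be any open ball in $K$ with $r>0$. Lift $Q$ to some $\widetilde Q \in \mathbb{T}^2$; then $\widetilde U := B_{r'}(\widetilde Q)$ with $r' = \min(r, \tfrac14)$ (small enough that $\pi|_{\widetilde U}$ is an isometry onto its image, using that the injectivity radius of the covering is bounded below) satisfies $\pi(\widetilde U) \subseteq U$. By the torus theorem, $W_t^{\mathbb{T}^2}(\widetilde P)$ becomes dense, so there is $\tau$ such that for all $t > \tau$ the set $W_t^{\mathbb{T}^2}(\widetilde P) \cap \widetilde U$ is non-empty. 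Applying $\pi$ and using $W_t^{K}(P) = \pi(W_t^{\mathbb{T}^2}(\widetilde P))$ gives $W_t^{K}(P) \cap U \supseteq \pi\bigl(W_t^{\mathbb{T}^2}(\widetilde P) \cap \widetilde U\bigr) \neq \emptyset$ for all $t > \tau$. Hence $W_t^{K}(P)$ becomes dense, and we can take $\tau(P,r)$ to be $\tau(\widetilde P, \min(r,\tfrac14))$ from the torus case.

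Alternatively, and perhaps even closer in spirit to the torus proof, I would rerun the argument of the previous theorem verbatim on the universal cover $\mathbb{R}^2$: the wave front upstairs is still the round circle of radius $t$ centered at a lift of $P$, one still isolates the same rectangle $R = [-2\sqrt{t}, -\sqrt{2t}] \times [f(a), f(b)]$ on which the upper half-circle is a graph of slope $O(1/\sqrt t)$ and vertical extent $1 + O(1/\sqrt t)$, and one now projects $R$ by the Klein-bottle covering $\mathbb{R}^2 \to K$ rather than by $\mathbb{R}^2 \to \mathbb{T}^2$. Since $K$ also has a fundamental domain of diameter $O(1)$ (a unit square with the appropriate glide-reflection identification on one pair of sides), the same near-horizontal graph of vertical height exceeding $1$ crisscrosses the fundamental domain and is $O(1/\sqrt t)$-dense in $K$; the only change is that the two vertical sides of the square are now identified with a flip, which does not affect the density estimate. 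I would remark that the same reasoning covers the asymmetric flat tori $S_r \times S_s$ and their Klein-bottle quotients after rescaling the rectangle.

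The only point requiring a little care — and the one I would expect to be the main (mild) obstacle — is the bookkeeping for $\pi(\widetilde U) \subseteq U$: one needs the covering $\pi$ to be distance-non-increasing and locally isometric on small balls, so that a genuinely small lifted ball maps into the target ball. This is immediate from $\pi$ being a local isometry with positive injectivity radius of the covering, but it is the one spot where the argument is not purely formal. Everything else — commutation of $\exp$ with $\pi$, preservation of geodesics, global existence of $\exp$ on a closed flat surface — is standard and already implicit in the framework set up earlier in the paper. I would therefore present the covering-space reduction as the main proof and mention the direct fundamental-domain computation as the reason the torus argument transfers without essential change.
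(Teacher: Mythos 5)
Your proposal is correct and follows essentially the same route the paper takes: the paper proves the theorem by remarking that ``a similar argument works for the flat Klein bottle which has a flat 2-torus as a 2:1 cover,'' which is exactly your covering-space reduction (and your alternative of rerunning the rectangle argument on the universal cover is the same idea made explicit). Your write-up is in fact more detailed than the paper's, correctly handling the commutation of $\exp$ with the covering projection and the fact that the projection is distance-non-increasing.
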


\paragraph{}
We first investigated the torus problem using results from the 
{\bf Gauss-circle problem},which tells us how many lattice points we expect
in the annulus $A_{t,t+h}$ between $W_{t+h}$ and $W_t$ in the universal cover. Already, Gauss
estimated the number of lattice points in this annulus to be
close to its area $2\pi t h$ and estimated an error $|E(t)| \leq \sqrt{2} \cdot 2\pi t$.
It is known, for example, that $E(t) \leq C t^{2/3}$. 
\footnote{The big open problem in this context is whether $E(t)=O(t^{1/2+\epsilon})$ holds
for every $\epsilon>0$.}
This means that the annulus $A_{t,t+h}$ has about $2\pi t h + C t^{2/3} h$ lattice points.
For $h=t^{-1/2}$, this area grows like $2\pi \sqrt{t}$ with an error $O(t^{1/6})$.
As we expect so many lattice points in $A_{t,h}$ and every of these lattice points 
is $h=1/\sqrt{t}$ close to the wave front $W_t$ we see that $W_t$ gets $1/\sqrt{t}$
close to the original point $P$. Changing the annulus to $A_{t+h,t+2h}$ shows
that we have points in distance $[1/\sqrt{t},2/\sqrt{t}]$ from $P$.
The {\bf geometry of numbers} approach could even shed better light on {\bf how fast}
the wave fronts fill out the torus. The proof using the Gauss-circle problem in higher
dimensions would need heavier mathematical machinery like {\bf Ehrhard theory}.

\paragraph{}
The simple constructive proof given above was found only later in the summer of 2024. 
It illustrates that sometimes, it is easier to prove a stronger result: the proof 
shows that already a small part of the wave front becomes dense. The reader might 
enjoy to modify the simple single variable calculus argument to show that also on a 
stretched torus $\mathbb{R}/(\alpha \mathbb{Z}) \times \mathbb{R}/(\beta \mathbb{Z})$,
the wave front becomes dense.  
The argument can also be adapted to wave fronts on higher dimensional flat tori
$\mathbb{R}^d/\mathbb{Z}^d$. Again, we just have to restrict to a small cuboid domain $R$,
where the wave front can be described in the cover as a graph of a function 
$f(x_1, \dots, x_{d-1})$ of $d-1$ variables. There is no doubt that in the case of a 
non-flat metric, the density of the wave front persists. However this has not been proven; we
do not know the shape of the wave front in the universal cover. But it is likely 
as the wave front gets longer. In the case when the geodesic flow has positive Lyapunov 
exponents, the wave front length in the universal cover would grow exponentially and make
a density result more likely. But we even don't know how to prove this. In the case
when the geodesic flow is hyperbolic, like on a compact Riemannian manifold 
of strictly negative curvature, the density would follow. 

\begin{conj}
For any smooth Riemannian metric on $\mathbb{T}^d$ with $d>1$, the wave front of any 
point $P$ becomes dense. 
\end{conj}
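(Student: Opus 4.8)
We sketch a strategy for this conjecture, which we are not able to carry out. Lift $P$ to a point
$\tilde P$ in the universal cover $\mathbb{R}^d$, equipped with the $\mathbb{Z}^d$-periodic metric $\tilde g$.
Every point at $\tilde g$-distance exactly $t$ from $\tilde P$ is reached by a minimizing, hence
length-$t$, geodesic, so the metric sphere $\partial B_t(\tilde P)$ is contained in $\exp_{\tilde P}(S_t)$,
the lift of $W_t(P)$; it therefore suffices to prove that $\partial B_t(\tilde P)$ projects densely onto
$\mathbb{T}^d$ for all large $t$. A separation argument in the geodesic space $\mathbb{R}^d$ reduces this to a
statement about the \emph{distance spectrum} $D_Q=\{\, d_{\tilde g}(\tilde P,\tilde Q+n):n\in\mathbb{Z}^d\,\}$
of a base point $Q$: if $\partial B_t(\tilde P)$ avoided the $\varepsilon$-ball about every lift of $Q$,
then each such ball, being connected and disjoint from $\partial B_t$, would lie in $B_t(\tilde P)$ or in
its exterior, so $D_Q$ would have a gap of length of order $\varepsilon$ straddling $t$. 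Hence the
conjecture follows once we know that for every $Q$ the consecutive gaps of $D_Q$ tend to $0$ as
$t\to\infty$. A volume count gives this on average --- by polynomial growth $\#(D_Q\cap[0,t])$ is of
order $t^d$ with multiplicity, so the average gap of $D_Q$ near $t$ is of order $t^{-(d-1)}\to 0$ --- but
the real task is to exclude \emph{every} large gap, exactly the point that the explicit single-variable
calculus argument circumvents in the flat case.

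\paragraph{}
To control all gaps we would use the asymptotic shape of large balls. By Burago's theorem $d_{\tilde g}$
agrees with a norm $\|\cdot\|_s$ on $\mathbb{R}^d$, the \emph{stable norm}, up to a bounded error, so
$t^{-1}B_t(\tilde P)$ converges to the unit ball $B^s$ of $\|\cdot\|_s$. Suppose we can choose a direction
$u$ with $\|u\|_s=1$ at which $\|\cdot\|_s$ is differentiable with \emph{irrational} normal $\nu(u)$ (not a
scalar multiple of a vector in $\mathbb{Q}^d$), and look at the ray $\mathbb{R}_{\ge 0}u$. In a window of fixed size
around the moving point $tu$, the sphere $\partial B_t(\tilde P)$ should converge, as $t\to\infty$, to a
level set $\Sigma$ of the \emph{Busemann function} $b_u$ of the minimizing rays in direction $u$ ---
equivalently a weak KAM solution, in Fathi's sense, for the cohomology class dual to $u$. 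Such a $\Sigma$
is, in $\mathbb{R}^d$, a hypersurface that is $\mathbb{Z}^{d-1}$-periodic parallel to the tangent hyperplane of $B^s$
at $u$, up to an affine drift by $\nu(u)$, and because $\nu(u)$ is irrational its image in $\mathbb{T}^d$ is
dense, and dense uniformly: $\Sigma$ meets the $\varepsilon/2$-ball about a lift of $Q$ inside every unit
window. Away from the cut locus of $\tilde P$ the curvature of $\partial B_t$ is of order $1/t$, so over an
$O(1)$ window $\partial B_t-tu$ stays $O(1/t)$-close to the tangent hyperplane and, after the blow-down,
$o(1)$-close to a translate of $\Sigma$; combined with the uniform density of $\Sigma$ this puts
$\partial B_t$ within $\varepsilon$ of a lift of $Q$ for all large $t$, killing the gaps of $D_Q$. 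One
chooses $u$ so that $\mathbb{R}_{\ge 0}u$ avoids the thin set of asymptotic directions of cut-locus ridges.

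\paragraph{}
The main obstacle is the first structural input: producing a \emph{good direction} $u$, one at which
$\|\cdot\|_s$ --- and the associated Busemann or weak KAM solution --- is regular and the drift $\nu(u)$ is
irrational. For $d=2$ this looks within reach via the Hedlund--Morse--Bangert structure theory of minimal
geodesics on $\mathbb{T}^2$: every direction carries a minimizing foliation or a Hedlund band, and one can steer
to an irrational direction with $C^1$ level sets, which is the plausible route to settling $\mathbb{T}^2$ with an
arbitrary smooth metric. For $d\ge 3$ the corresponding Aubry--Mather and weak KAM theory is far less
complete: Mather sets need not be regular, the stable norm can be a polytope, and --- worst of all --- if
\emph{every} differentiability point of $\|\cdot\|_s$ had rational normal (a polytope stable norm with
rational facet normals) then the reduction of the first paragraph breaks down and one must use the
\emph{non-minimizing} part of $\exp_{\tilde P}(S_t)$, which is strictly larger than $\partial B_t(\tilde P)$
and whose large-$t$ geometry, governed by the conjugate locus, we do not control. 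Finally the usual
shortcuts are vacuous on the torus: a metric on $\mathbb{T}^d$ without conjugate points is flat (Hopf for $d=2$,
Burago--Ivanov in general), an Anosov geodesic flow has no conjugate points and would again force the
metric flat, and $\mathbb{T}^d$ admits no negatively curved metric (Preissmann); even metrics on $\mathbb{T}^d$ with
positive Lyapunov exponents, where $\exp_{\tilde P}(S_t)$ is expected to grow exponentially, are reached by
no argument we know.
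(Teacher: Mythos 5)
This statement is posed in the paper as an open conjecture: the authors offer no proof, and explicitly remark just before stating it that for a non-flat metric ``we do not know the shape of the wave front in the universal cover.'' You correctly do not claim a proof either --- your submission is a program sketch with self-acknowledged gaps --- so there is nothing to certify as a proof on either side. Your diagnosis of where the difficulty lies agrees with the paper's, and your structural inputs (Burago's stable-norm comparison $d_{\tilde g}=\|\cdot\|_s+O(1)$, the Hedlund--Morse--Bangert theory on $\mathbb{T}^2$, Hopf and Burago--Ivanov on tori without conjugate points, the absence of Anosov or negatively curved metrics on $\mathbb{T}^d$) are all correctly quoted and relevant.

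The genuine gaps, to name them concretely: (1) your reduction replaces the wave front $\exp_{\tilde P}(S_t)$ by the metric sphere $\partial B_t(\tilde P)$, i.e.\ discards the non-minimizing part; this is legitimate only if $\partial B_t$ alone becomes dense, and as you note yourself, if the stable norm is a polytope with rational facet normals the minimizing part may well fail to be dense, at which point the whole strategy collapses onto the conjugate-locus geometry that nobody controls. (2) The assertion that away from the cut locus the second fundamental form of $\partial B_t$ decays like $1/t$, so that $\partial B_t-tu$ converges in $O(1)$ windows to a Busemann level set, is unjustified for a general periodic metric: the shape operator obeys a Riccati equation and need not decay, and the local convergence of rescaled spheres to horospheres is essentially equivalent to regularity statements about the stable norm and the minimal-geodesic foliation that are open for $d\ge 3$ and delicate even for $d=2$. (3) The existence of a ``good direction'' $u$ --- differentiability of $\|\cdot\|_s$ at $u$ with irrational normal, plus a well-behaved weak KAM solution there --- is exactly the unproven input; without it the uniform density of the limiting hypersurface $\Sigma$ in $\mathbb{T}^d$ cannot be asserted. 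In short, your sketch is a reasonable and well-informed reduction of the conjecture to known hard open problems in Aubry--Mather/stable-norm theory, but each of the three steps above is itself conjectural, so the statement remains open, as the paper intends.
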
 

\begin{figure}[!htpb]
\scalebox{0.14}{\includegraphics{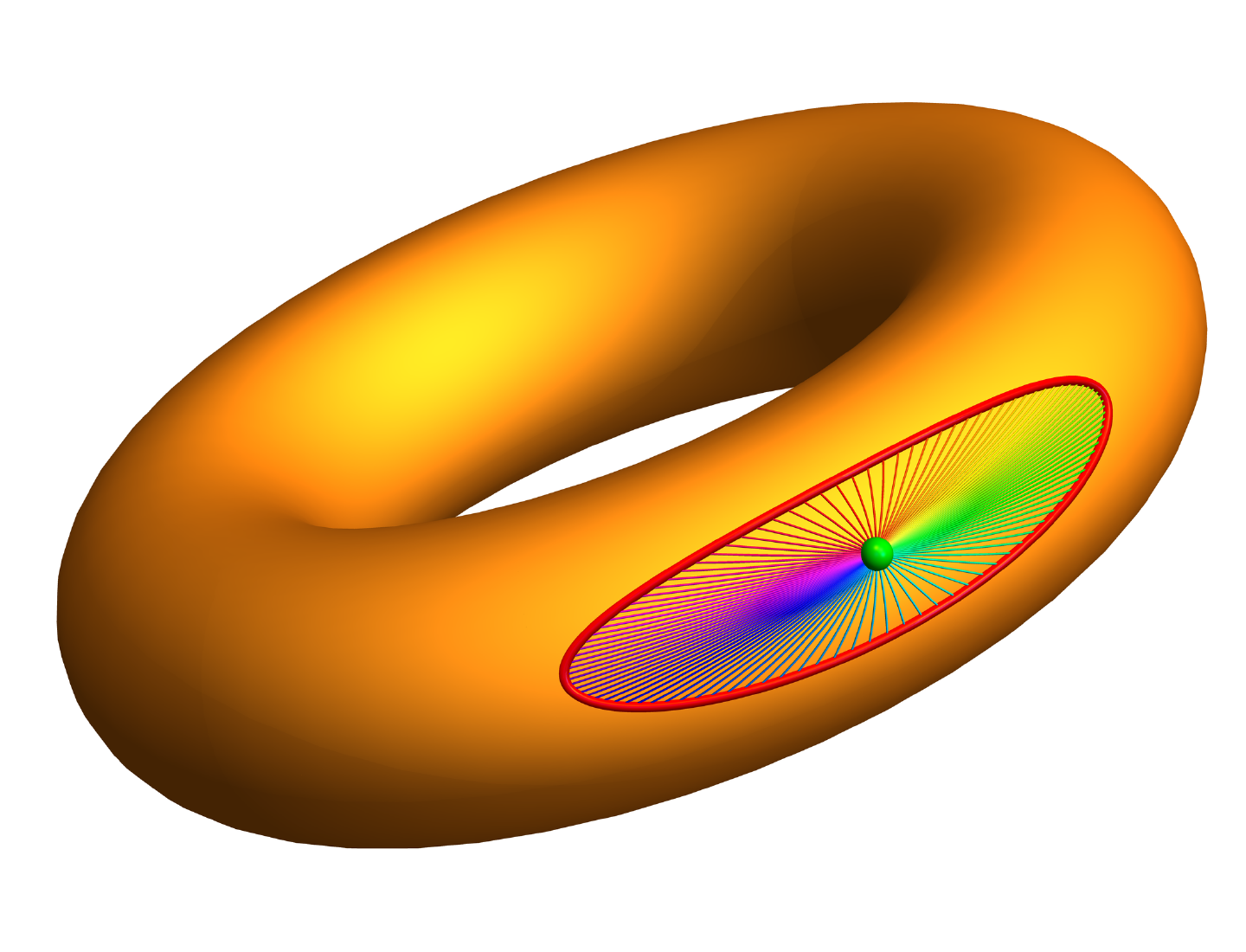}}
\scalebox{0.14}{\includegraphics{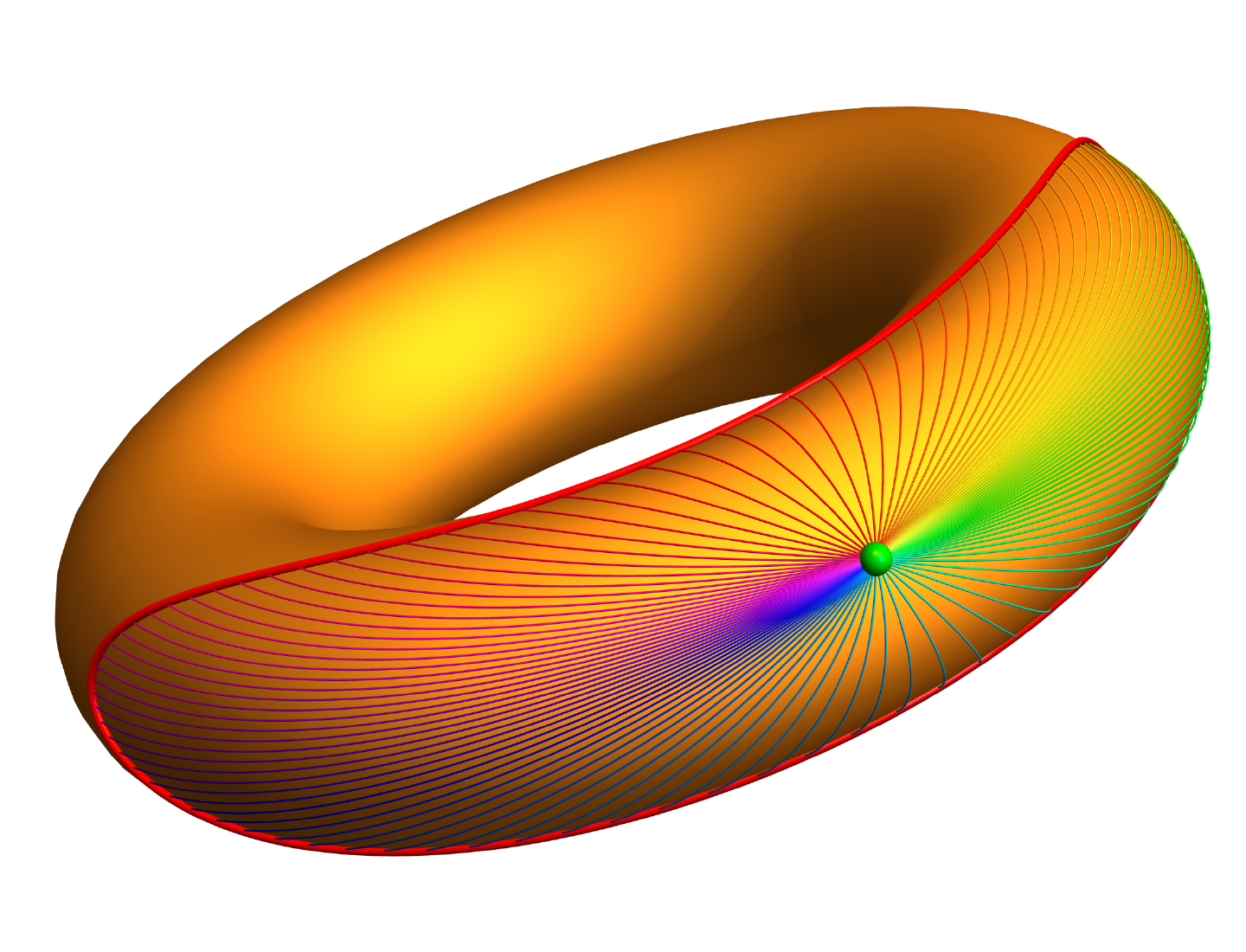}} \\
\scalebox{0.14}{\includegraphics{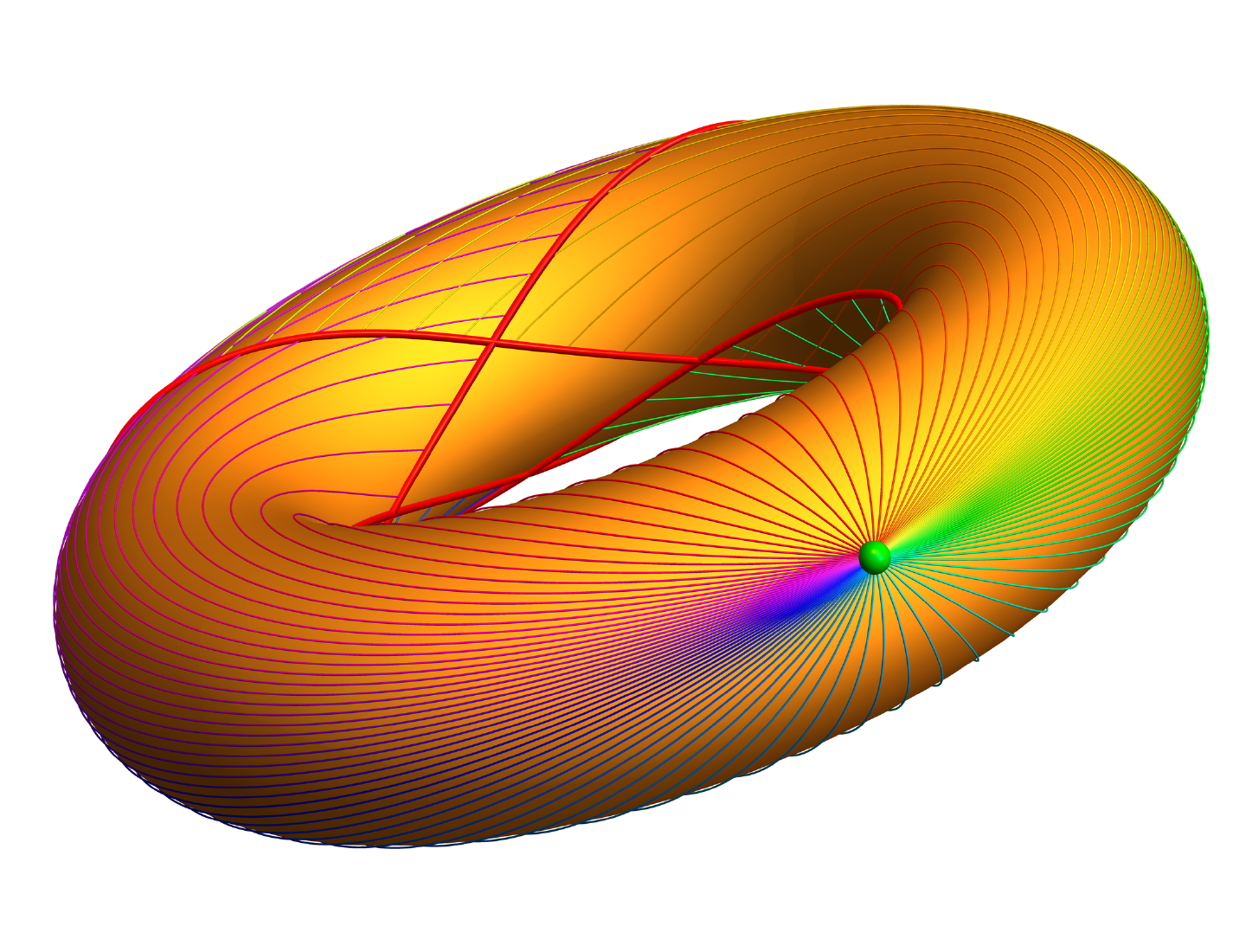}}
\scalebox{0.14}{\includegraphics{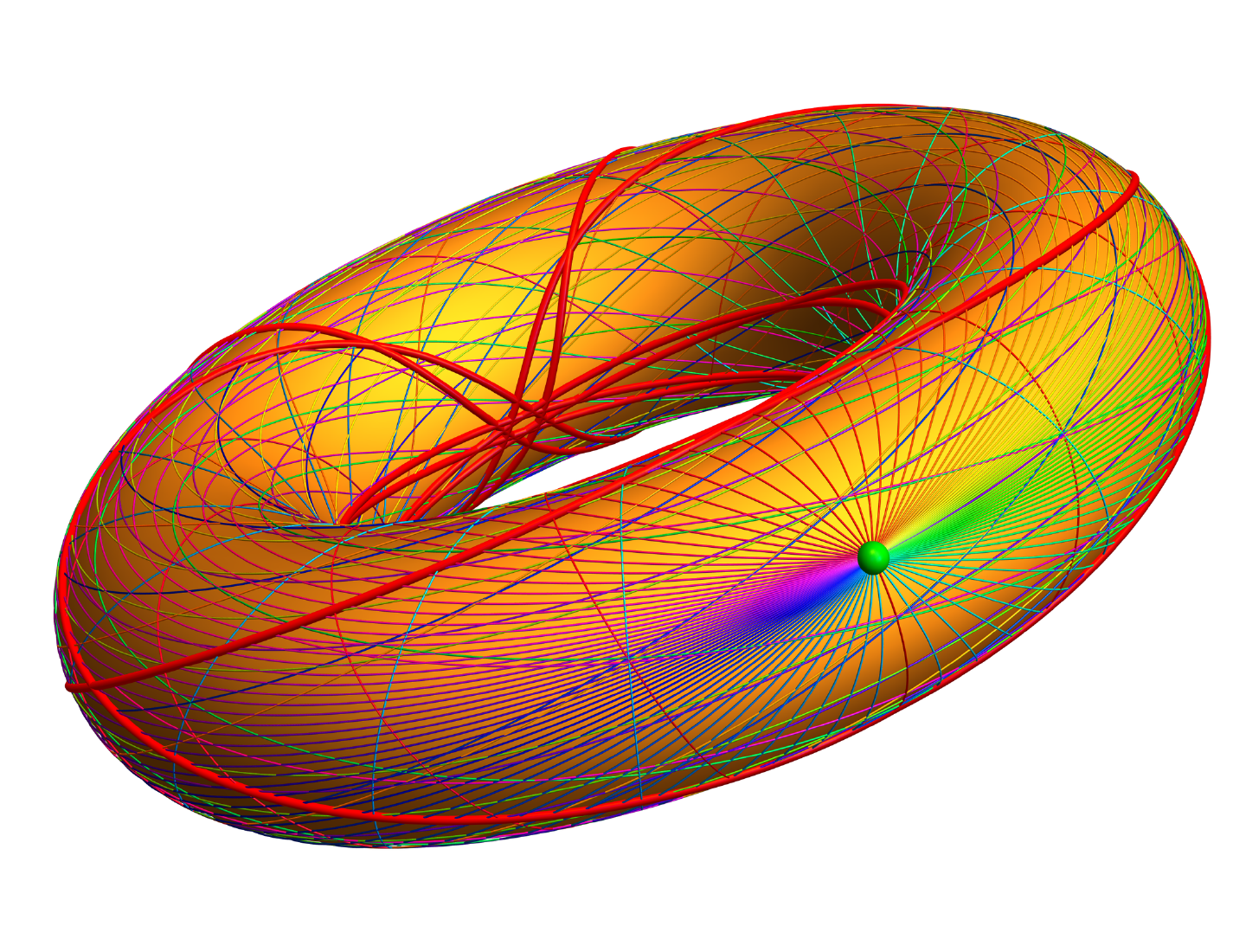}}
\label{Wave Fronts on Tori}
\caption{
Wave fronts $W_t$ on a flat torus. We prove that 
they become dense.
}
\end{figure}  

\begin{figure}[!htpb]
\scalebox{0.14}{\includegraphics{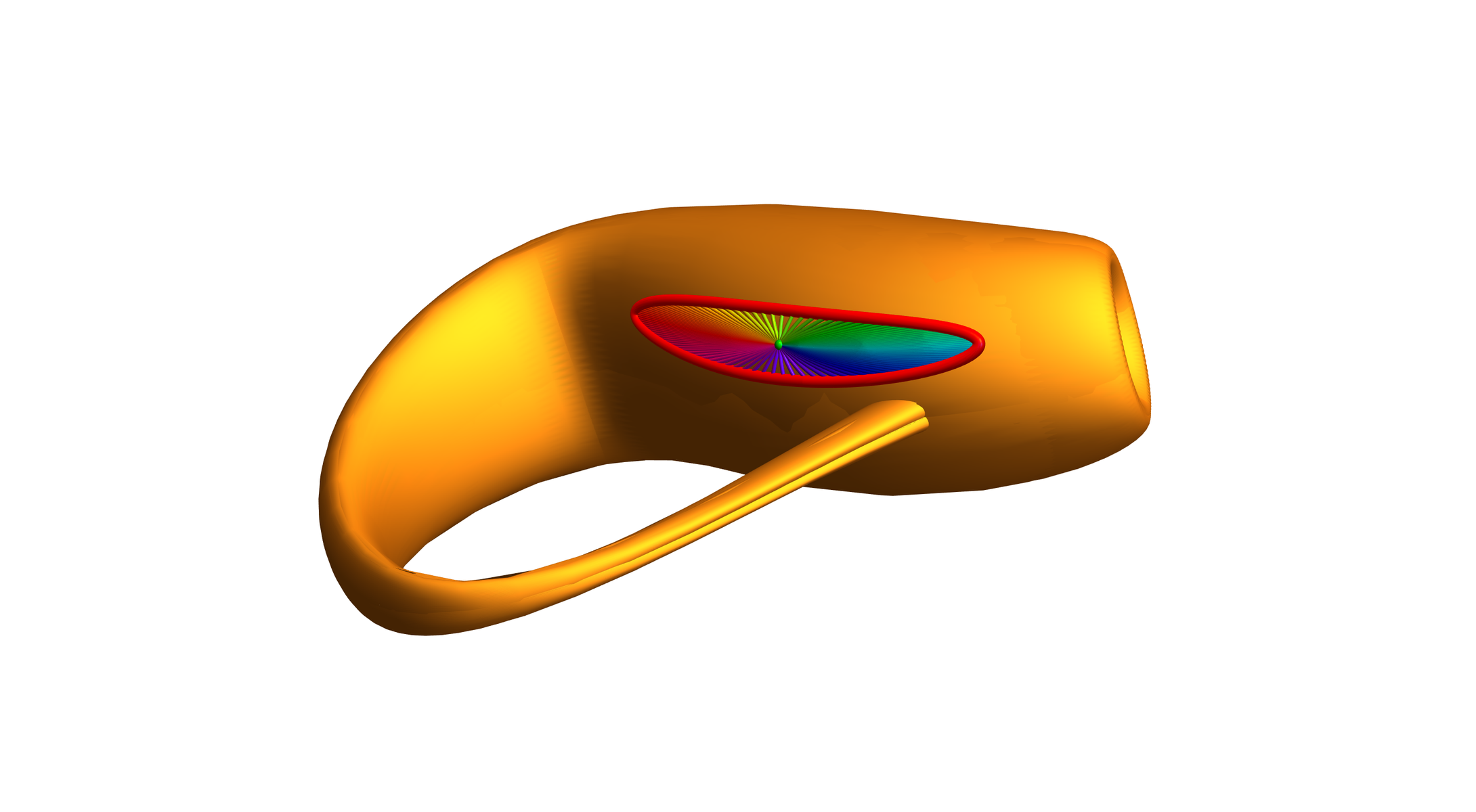}}
\scalebox{0.14}{\includegraphics{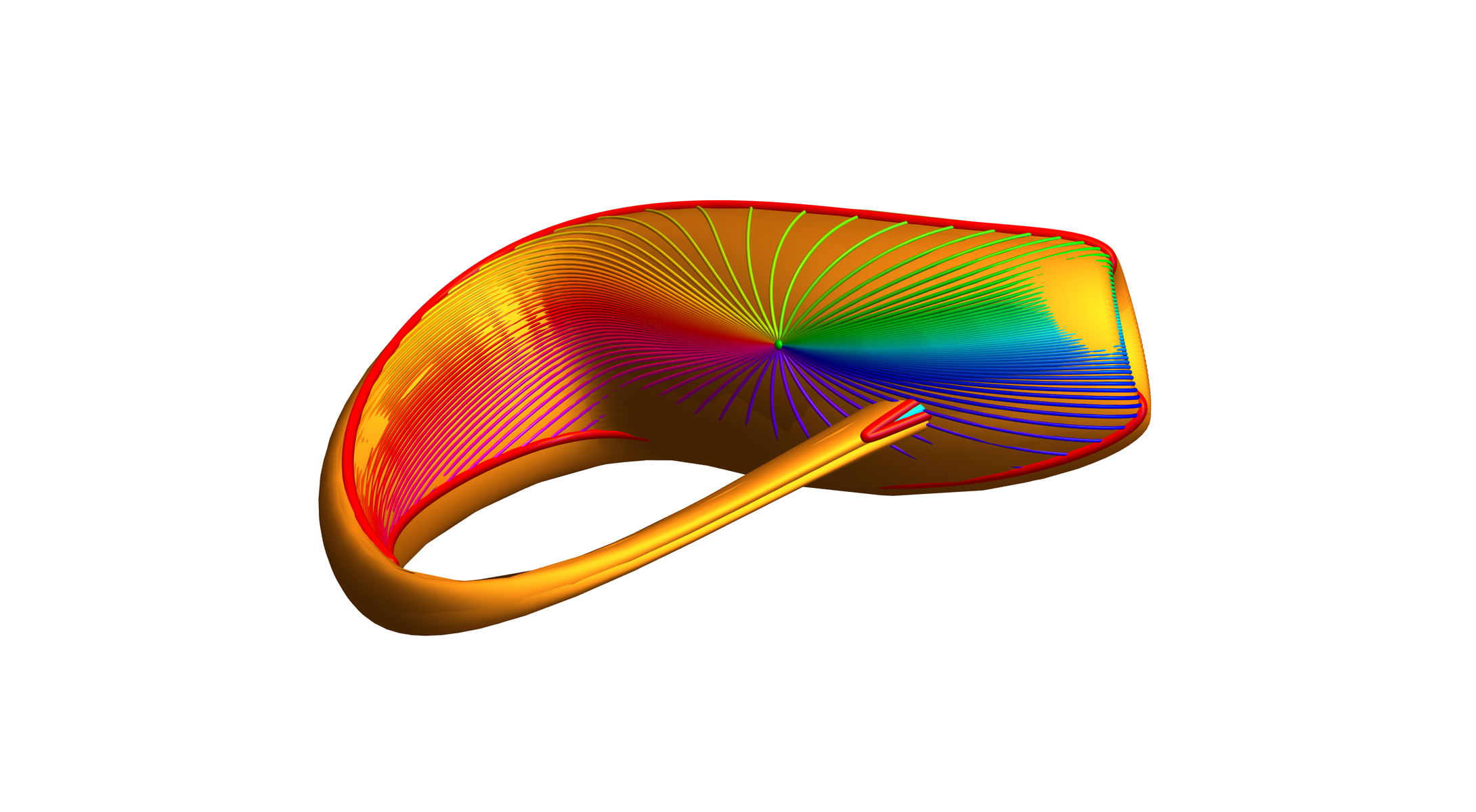}} 
\scalebox{0.14}{\includegraphics{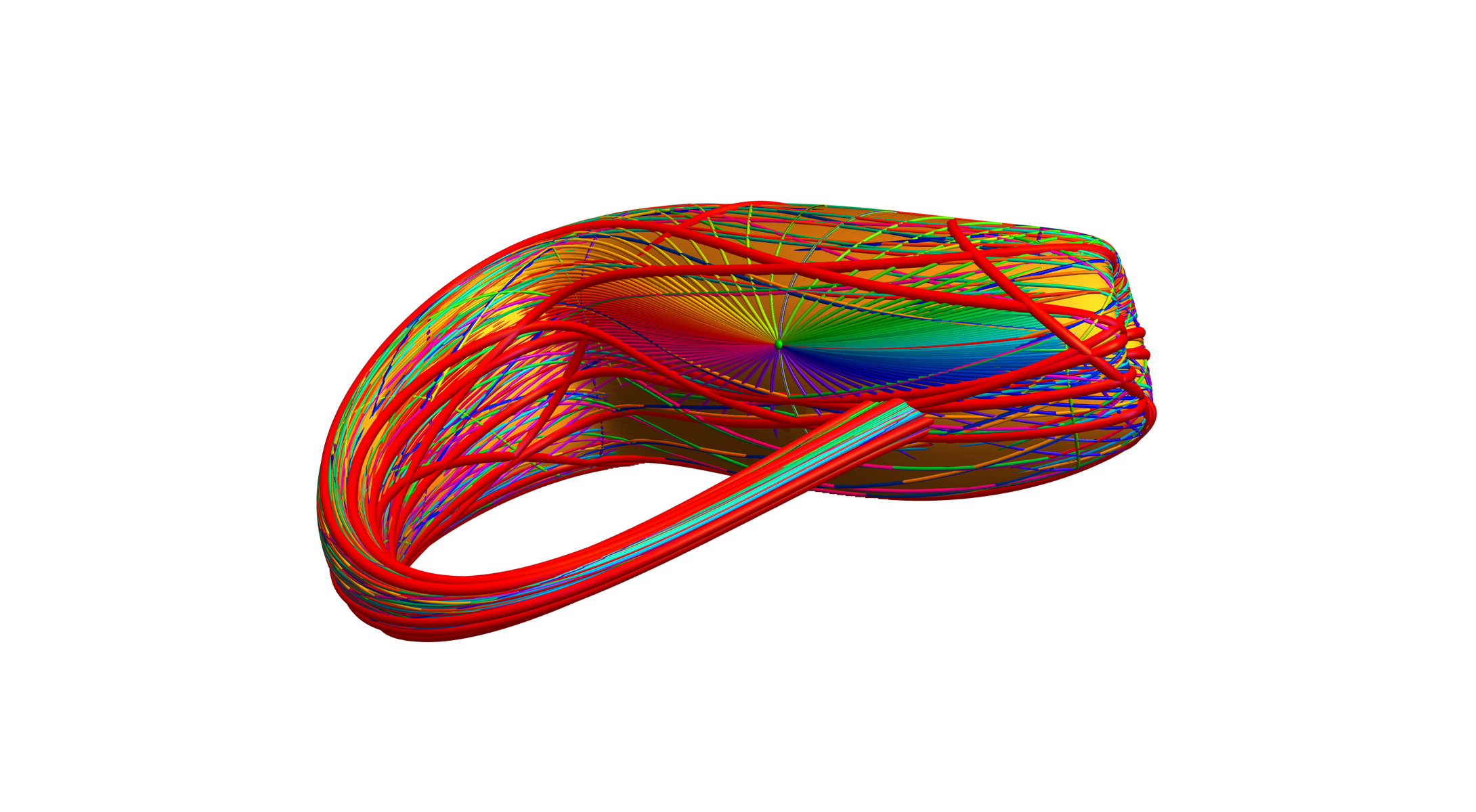}}
\scalebox{0.14}{\includegraphics{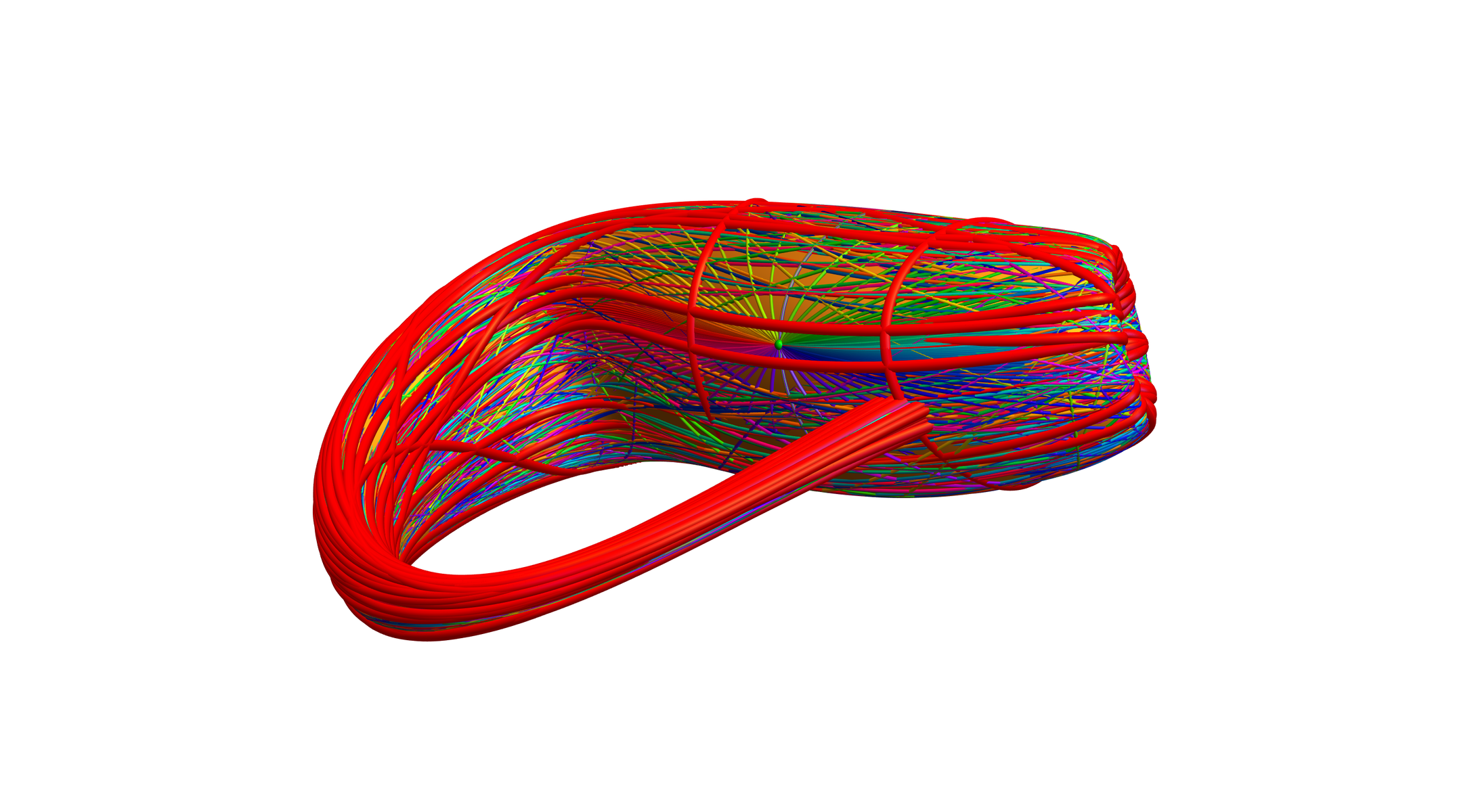}}
\label{Wave fronts on a Klein bottle } 
\caption{
Wave fronts $W_t$ on a flat Klein bottle. We prove that  
they become dense.
}
\end{figure}

\section{Billiards}

\paragraph{}
Billiards in convex tables were used by George Birkhoff
\cite{Birkhoff} (in section 6 of his book) and his postdoc Hillel Poritsky \cite{Por50} 
to illustrate principles and theorems in dynamical systems theory, especially in the context
of the 3-body problem. Periodic points are critical points of a variational principle.
The Birkhoff fixed point theorem even can establish the existence of many
periodic points if the billiard map. For modern texts on billiards, see 
\cite{KozlovTreshchev,Tab95}. 
A plethora of open problems persist: one does not know for example whether every triangular 
billiard has a periodic orbit. No smooth convex billiard exhibiting chaos 
(positive Lyapunov exponents on a set of positive measure in the phase space) is known. 
Candidates are tables like $x^4+y^4 \leq 1$. 
Even one of the first problem of Poritski \cite{Por50}, asking whether every integrable smooth 
convex billiard is an ellipse, remains unsettled in its full form despite
partial progress \cite{bialy2022birkhoff}.

\paragraph{}
A pair of points $p,q$ in $M$ is called a {\bf Wiedersehen pair}, if every geodesic through $p$
passes through $q$ and every geodesic through $q$ passes through $p$. If every point belongs to a Wiedersehen pair, then 
$M$ is called a ``Wiedersehen manifold" and must be a sphere as Blashke has conjectured.
Ellipsoids are examples of non-Wiedersehen manifolds, where some Wiedersehen pairs exist. In such situations, 
not all wave fronts become dense but most are believed to become dense. 
There are Wiedersehen situations also for billiards.
If we start a wave front at the center $P$ of a circular billiard table, then the 
wave front will periodically return to the point $P$. But already for the same 
circle, we expect the wave front $W_t(P)$ to become dense, if the starting point $P$ is not at 
the center. We are not aware even whether the density of the wave front in the 
simple duck problem of the circle has been shown. We can not use the method used here
for the torus.  In general, we expect:

\begin{conj}
For a generic convex billiard table, all billiard wave fronts $W_t(P)$ become dense. 
\end{conj}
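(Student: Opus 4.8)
The plan is to begin with the simplest nontrivial instance, the ``duck problem'': a circular table $D=\{|z|\le 1\}$ with an interior source $P=(\rho,0)$, $0<\rho<1$. The general conjecture contains this case, the torus proof does not transfer (there is no flat universal cover to unfold onto), and the disk billiard, being integrable, is the one setting where explicit formulas are available. Parametrize the geodesic leaving $P$ by its direction angle $\theta$. Elementary plane geometry shows that the chord it first travels, and every chord after each reflection, lies at the same distance $d(\theta)=\rho\,|\sin\theta|$ from the center; each reflection advances the bounce point along $\partial D$ by the central angle $2\arccos d(\theta)$, and each chord contributes arc length $2\sqrt{1-d(\theta)^2}$. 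Hence on each invariant circle $\{d=\mathrm{const}\}$ the billiard is a rigid rotation, and whenever $\arccos d(\theta)/\pi$ is irrational the corresponding trajectory is dense in the annulus $\{d(\theta)\le |z|\le 1\}$.

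Fix a target ball $B_r(Q)$ with $Q$ in the interior of $D$. Density of $W_t(P)$ in the sense of Definition~\ref{dense} amounts to: for every sufficiently large $t$ there is a direction $\theta$ such that the billiard orbit leaving $P$ in direction $\theta$ lies within $r$ of $Q$ at arc length exactly $t$. A single dense trajectory does not suffice, because its visits to $B_r(Q)$ occur along arc-length intervals separated by gaps. The plan is instead to use a short interval $J$ of directions chosen so that $d(\theta)$ stays in a small subinterval of $(0,\min(\rho,|Q|))$ (or of $(0,r)$ when $Q$ is the center) and away from $\theta\in\pi\mathbb{Z}$; for such $\theta$ the chord at distance $d(\theta)$ from the center cuts through $B_{r/2}(Q)$ for an appropriate angular phase, and the orbit spends arc length at least $\sqrt3\,r$ inside $B_r(Q)$ on each such chord. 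For fixed $\theta$, the ``good'' arc lengths then form a union of intervals of length $\ge\sqrt3\,r$ with bounded gaps, the bound being the return gap of the rotation by $2\arccos d(\theta)$ to a fixed angular window, which is finite for each irrational rotation. Averaging over $\theta\in J$: the $n$-th bounce point is $s_1(\theta)+2n\arccos d(\theta)\bmod 2\pi$ up to a bounded shift, and with $n\sim t/(2\sqrt{1-d(\theta)^2})$ this quantity sweeps the circle of order $n\,|J|$ times as $\theta$ ranges over $J$; so for $t$ large the set of $\theta\in J$ for which $t$ falls in a gap has small Lebesgue measure, leaving a good $\theta$ and hence a point of $W_t(P)$ in $B_r(Q)$.

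The main obstacle, already for the disk and presumably the reason the statement is only conjectured, is to make this last estimate effective and uniform in $Q$. Return gaps of an irrational rotation to a fixed interval are bounded for each individual rotation but not uniformly over a parameter interval, because rotation numbers near a rational $p/q$ with small $q$ produce long excursions; so the bad-set bound cannot be obtained orbit by orbit and must instead come from a Weyl-type estimate for the exponential sums $\sum_{n\le M}e^{\,ik(s_1(\theta)+2n\arccos d(\theta))}$ integrated over $\theta\in J$, with the resonant values of $\arccos d(\theta)/\pi$ in $J$ handled separately. Obtaining a rate strong enough to beat the slow drift of the bounce pattern in $\theta$ is the real analytic work here.

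Passing from the disk to a \emph{generic} convex table is a further and larger step, since the conserved quantity disappears and with it the foliation by invariant circles. Here the plan would be to decompose phase space into a Lazutkin region near the boundary, foliated by KAM caustics on which the billiard is again a family of circle maps so that the disk argument perturbs, and a complementary region of positive measure where one expects positive Lyapunov exponents; in the latter one would instead want a quantitative lower bound on the length of $W_t$ (in the spirit of \cite{Vicente2020}) together with some weak equidistribution of the iterates of the initial direction circle under the billiard map. Neither ingredient is currently available in the strength required, so the statement remains conjectural; a rigorous treatment of the disk alone would already be a worthwhile first step.
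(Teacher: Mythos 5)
The statement you were asked to prove is stated in the paper as a \emph{conjecture}: the authors explicitly say that even the circular-table (``duck'') case is open, that they are not aware of a proof of density there, and that their torus method does not transfer. So there is no proof in the paper to compare against, and your submission --- which you yourself describe as a plan and end by calling the statement ``conjectural'' --- is a research program, not a proof. The elementary facts you record about the disk (conserved distance $d(\theta)=\rho|\sin\theta|$ to the center, rotation by $2\arccos d$ per bounce, chord length $2\sqrt{1-d^2}$, density of an orbit in its annulus for irrational rotation number) are all correct, and your observation that a single dense orbit does not give density of the time-$t$ front is exactly the right point to isolate. But the argument does not close, and the gaps are not merely technical.

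Concretely: (i) your sweeping estimate controls only the angular phase $s_1(\theta)+2n\arccos d(\theta)$ of the chord on which the particle sits at arc length $t$; membership of the point $x_\theta(t)$ in $B_r(Q)$ also constrains the longitudinal position along that chord, namely $t$ minus the accumulated length $2n\sqrt{1-d(\theta)^2}$ of the previous chords. You are therefore asking a curve of phases in a two-dimensional torus to hit a product window, and the two phases drift in $\theta$ at comparable rates of order $n$; they can be resonantly locked on whole subintervals of $J$, so a one-dimensional ``sweeps the circle $n|J|$ times'' count does not by itself exclude that the window is missed for every $\theta\in J$. (ii) The bad-set measure bound has to be uniform over the rotation numbers $\arccos d(\theta)/\pi$ occurring in $J$, including neighbourhoods of rationals with small denominator; the Weyl-sum substitute you gesture at is precisely the part that is not carried out, and it is where the analytic difficulty lives. (iii) For a generic convex table, your proposed decomposition requires a positive-measure region of positive Lyapunov exponents, and, as the paper itself records, no smooth convex billiard with that property is known --- so that half of the plan rests on a separate well-known open problem rather than on available tools. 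Read as heuristics and as a statement of what would need to be proved, your text is sensible and consistent with the paper's discussion; read as a proof, it has genuine missing steps already in the disk case.
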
 

\paragraph{}
Also convincing is the following statement: 

\begin{conj}
For all convex billiard tables, there is a point $P$ such that the wave front $W_t(P)$ becomes dense. 
\end{conj}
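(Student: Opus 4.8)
The plan is to pass to the billiard flow $\phi^t$ on the unit tangent bundle $X = S\Omega$ of the table $\Omega$, with reflections at $\partial\Omega$ built in, and to use the identity $W_t(P) = \pi(\phi^t(S_P))$, where $\pi : X \to \Omega$ is the footpoint projection and $S_P = \pi^{-1}(P)$ is the circle of unit directions at $P$. In these terms ``$W_t(P)$ becomes dense'' is equivalent to: for every open $U \subset \Omega$ there is $\tau$ so that $\phi^t(S_P) \cap \pi^{-1}(U) \neq \emptyset$ for all $t > \tau$. The freedom to choose $P$ is essential, since for integrable tables such as the ellipse most starting points fail --- from a focus the wave front collapses asymptotically onto the major axis. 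The target to aim for is the stronger and cleaner statement that for a well-chosen $P$ the closed curve $W_t(P)$ is $\epsilon(t)$-dense with $\epsilon(t)\to 0$; this immediately gives density in the sense of Definition~\ref{dense}, exactly as in the torus proof, where already one rectangle's worth of the front is shown to be $3/\sqrt{t}$-dense.

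\paragraph{}
First I would split the phase space into an ``integrable'' part $X_{\mathrm{int}}$ (the union of invariant caustic curves --- the whispering-gallery curves accumulating on $\partial\Omega$, together with any KAM tori) and a ``chaotic'' part $X_{\mathrm{ch}}$. On $X_{\mathrm{ch}}$ I would invoke an equidistribution-of-expanding-curves principle: whenever the Liouville measure restricted to a component of $X_{\mathrm{ch}}$ is ergodic with nonzero Lyapunov exponents, the image $\phi^t(C)$ of any short curve $C$ transverse to the stable direction equidistributes toward the support of that component; choosing $P$ generically makes $S_P$ transverse to the stable direction, so $\pi(\phi^t(S_P\cap X_{\mathrm{ch}}))$ equidistributes in $\pi(X_{\mathrm{ch}})$, with a rate. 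On $X_{\mathrm{int}}$ I would argue directly with caustics: an interior point $P$ has, among the directions in $S_P$, a positive-measure family of directions whose trajectories lie on caustics, and these caustics sweep out the whole integrable region of $\Omega$; since a caustic trajectory of irrational rotation number is itself equidistributed in its caustic annulus and the rational rotation numbers form a null set, $\pi(\phi^t(S_P\cap X_{\mathrm{int}}))$ also becomes dense in $\pi(X_{\mathrm{int}})$. Adding the two contributions and using $\pi(X_{\mathrm{int}})\cup\pi(X_{\mathrm{ch}})=\Omega$ would give that $W_t(P)$ is eventually $\epsilon$-dense for every $\epsilon$.

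\paragraph{}
Two points deserve emphasis. One is \emph{why the full circle $S_P$ and not a single direction is used}: a single dense billiard orbit visits a fixed small target $U$ only along a sparse sequence of times $t_n\to\infty$ whose gaps may far exceed the short passage intervals, so it cannot by itself deliver ``for all large $t$''. The connected curve $W_t(P)$, being long for large $t$, has different arcs passing through $U$ along time intervals that overlap and plug those gaps; this is precisely what equidistribution of $\pi(\phi^t(S_P))$ encodes, and it must be extracted carefully. The other is the \emph{main obstacle}: essentially nothing structural is known about a general smooth convex billiard --- it is a well-known open problem (noted above) whether a typical such table has chaotic components of positive measure at all, let alone with the hyperbolicity and mixing of curves used here. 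I would therefore proceed in stages: (i) the ellipse, where $X_{\mathrm{int}}=X$ carries the explicit confocal-conic foliation and the caustic argument should go through; (ii) tables with proven ergodicity and hyperbolicity, such as the Bunimovich stadium, where the equidistribution input is available; (iii) general integrable convex billiards, via the caustic argument alone; and (iv) the general case, which on present knowledge seems to require a genuine advance on the ergodic theory of convex billiards and may be as hard as the surrounding open problems in the area. A softer route worth trying in parallel is a Baire-category argument showing $\{P : W_t(P)\text{ becomes dense}\}$ is residual in $\Omega$, using continuity of $\phi^t$ in both $t$ and the initial condition to phrase ``$W_t(P)\cap U\neq\emptyset$ for all $t$ in a window'' as an open condition on $P$; but turning the resulting uncountable intersections over $t$ into a genuine $G_\delta$ looks delicate.
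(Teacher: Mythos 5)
This statement is a \emph{conjecture} in the paper: the authors offer no proof, explicitly remark that even the circular table (the off-center ``duck problem'') is open to their knowledge, and note that their torus method does not transfer. So there is no proof of record to compare against, and your text --- candidly --- is a research program rather than a proof. Judged as such, it has genuine gaps that you partly acknowledge but do not close. The decomposition into $X_{\mathrm{int}}$ and $X_{\mathrm{ch}}$ rests on inputs that simply do not exist for a general smooth convex table: as the paper itself points out, no smooth convex billiard with positive Lyapunov exponents on a set of positive measure is known, so the ``chaotic part'' of your argument has no known instance to apply to (the Bunimovich stadium is not convex in the smooth-convex-billiard sense used here, and in any case equidistribution of pushed-forward curves with a rate is a strong hypothesis, not a theorem you can invoke off the shelf). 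Conversely, on the integrable part your caustic argument proves the wrong statement: equidistribution of an individual irrational-rotation-number orbit in its caustic annulus shows that the \emph{union} $\bigcup_{s\le t}W_s(P)$ meets every open set, not that the time-$t$ slice $W_t(P)$ does so for all large $t$. This is exactly the single-orbit fallacy you correctly flag in your third paragraph for the chaotic part, but the caustic half of your argument commits it anyway. To get density of the slice you must control the whole one-parameter family of directions simultaneously --- which is precisely what the paper's torus proof does via the explicit graph and slope estimate, and what is missing here because the exponential map of a convex billiard has no comparable explicit form.

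Even your stage (i), the ellipse, is not safe: from a focus the front degenerates, and for other points the confocal foliation gives you invariant annuli on which, again, you only control orbit closures, not wave-front slices. The Baire-category fallback has the obstruction you name (the ``for all $t>\tau$'' quantifier does not obviously yield a $G_\delta$), plus a second one: a residual set of $P$ would prove the \emph{generic-table, all-$P$} conjecture restricted to a generic point, but the statement at hand asks for existence of one $P$ for \emph{every} convex table, including integrable ones where the candidate residual set could a priori be empty of usable points. In short: your plan correctly identifies where the difficulty lives, but each branch of the case split relies on an unproved input or proves density of the swept region rather than of the front itself. The statement remains open, consistent with its status in the paper.
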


\paragraph{}
Here is a problem that we feel could be settled with a simple argument, but which also 
escapes us so far:

\begin{conj}
For all convex polygon billiards, all the wave fronts $W_t(P)$ become dense.
\end{conj}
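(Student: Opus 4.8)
We outline a possible route to this conjecture; it reaches the \emph{rational} case and makes clear where the general case resists. The plan is to \emph{unfold} the billiard and then carry over the guiding idea of the torus proof --- isolate one short arc of the wave front whose image we can understand, rather than the whole front. Fix a convex polygon $Q$ and $P \in Q$. A billiard trajectory from $P$ in direction $\theta$ is obtained from the straight ray $P + \mathbb{R}_{\geq 0}e^{i\theta}$ by reflecting $Q$ across an edge each time the ray would leave it, then continuing straight; write $\pi$ for the folding map that returns every reflected copy of $Q$ onto $Q$. Then $W_t(P) = \pi(C_t)$, where $C_t = \{x : |x-P| = t\}$ is the Euclidean circle of radius $t$ in the unfolded picture and the finitely many arcs of $C_t$ that meet an unfolded vertex are removed. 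Thus Definition~\ref{dense} for $W_t(P)$ is equivalent to: for every $\varepsilon>0$ there is $\tau$ such that $\pi(C_t)$ is $\varepsilon$-dense in $Q$ for all $t>\tau$.

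\paragraph{}
Suppose first that every interior angle of $Q$ is a rational multiple of $\pi$. Then the group generated by the linear parts of the edge-reflections is finite, and the billiard in a fixed direction is, in the standard way, conjugate to the straight-line flow in a corresponding direction on a compact translation surface $X_Q$. Choose a direction $\phi_0$ having no saddle connection on $X_Q$ --- so the $\phi_0$-flow is minimal --- and whose forward orbit through the image of $P$ never meets a cone point; both requirements exclude only countably many directions. By minimality there is $T = T(\varepsilon,\phi_0)$ so that the first length-$T$ piece of that orbit is already $(\varepsilon/2)$-dense in $Q$. Pick $\theta_0$ with $\theta_0 + \tfrac{\pi}{2}$ equal to $\phi_0$ modulo the finite reflection group, so that $C_t$ is tangent to direction $\phi_0$ at the point lying in direction $\theta_0$ from $P$, and let $J_t\subset C_t$ be the arc over the \emph{fixed} angular window $[\theta_0-\delta,\theta_0+\delta]$ with $\delta := \varepsilon/(4T)$. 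Then $\pi(J_t)$ is a curve in $Q$ of length $2\delta t$ whose direction stays within $\delta$ of $\phi_0$; integrating the direction defect, its first length-$T$ piece stays within $\delta T = \varepsilon/4$ of the straight $\phi_0$-orbit issuing from its starting point, so it is $(3\varepsilon/4)$-dense and hence $\varepsilon$-dense in $Q$. This piece is available as soon as $2\delta t \geq T$, i.e. for $t \geq 2T^2/\varepsilon =: \tau(\varepsilon)$, and $\pi(J_t)\subseteq W_t(P)$, so the wave front becomes dense --- using, as in the torus case, only a tiny slice of the front.

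\paragraph{}
Two obstacles remain. The first is internal: when one integrates the direction defect to compare the slowly turning arc $\pi(J_t)$ with the straight $\phi_0$-orbit, the two curves may hit $\partial Q$ in different orders and separate discontinuously, so the shadowing step needs either a transversality argument (bad configurations being exceptional) or a quantitative replacement for bare minimality. The second, and the genuine difficulty, is that the whole scheme rests on the translation-surface structure and so reaches only \emph{rational} convex polygons; for irrational ones one does not even know that a single straight billiard trajectory in a generic direction is dense, hence there is no $\phi_0$ to start from. For the general case one would try to approximate an irrational convex polygon by rational ones and transfer density --- but density of the wave front is not visibly stable under such perturbations, since an arbitrarily small change of $Q$ alters the reflection combinatorics of every sufficiently long trajectory --- or else look for a soft argument exploiting that $W_t(P)$ samples all directions at once: for a ball $B$ and large $t$ one wants a single $\theta$ with the folded ray of length exactly $t$ entering $B$, and one would hope for an intermediate-value argument along $\theta\mapsto\pi(P+te^{i\theta})$ driven by $\mathrm{length}(W_t)\to\infty$ while $Q$ stays bounded. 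Making either route work for all convex polygons is where we expect the real obstruction, which is presumably why the statement is recorded here only as a conjecture.
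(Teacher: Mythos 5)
The statement you are proving is recorded in the paper only as a conjecture --- the authors explicitly say it ``escapes us so far'' --- so there is no proof of record to compare against, and your text, which candidly stops short of a proof, does not close the gap either. Your rational-case sketch is in the right spirit: unfolding a rational convex polygon to a compact translation surface $X_Q$ (Katok--Zemlyakov), picking a minimal direction $\phi_0$, and isolating a thin arc of the wave front tangent to $\phi_0$ is a genuine transplant of the paper's torus trick (there the graph of $f(x)=\sqrt{t^2-x^2}$ over a short window becomes nearly linear with a well-controlled slope). But even in the rational case two steps are not yet proofs. First, the shadowing estimate $|\gamma(s)-(x+se^{i\phi_0})|\le\delta s$ is a planar computation; on $X_Q$ the nearly parallel curve and the straight $\phi_0$-orbit can pass on opposite sides of a cone point and separate by a definite amount instantly, and the starting point $x=\pi(P+te^{i(\theta_0-\delta)})$ of the comparison orbit moves with $t$, so you cannot simply choose it to avoid the (nonempty) set of bad configurations; you would need either to excise and re-glue the offending sub-segments or to upgrade from minimality to unique ergodicity (Kerckhoff--Masur--Smillie) and argue with Birkhoff sums along the tilted curve. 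Second, you arrange density for the orbit through $P$ but then invoke $\varepsilon$-density of a length-$T$ segment of a \emph{different} orbit; minimality does give a uniform such $T$ by a syndeticity/compactness argument, but that needs to be said, and for merely minimal (not uniquely ergodic) directions the uniformity over starting points near separatrices is exactly where the care is required.

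The deeper issue is the one you name yourself: the entire mechanism lives on the translation surface and therefore only sees rational polygons, while the conjecture is about \emph{all} convex polygons. For an irrational polygon there is no finite reflection group, no compact unfolding, and not even a known dense single trajectory to shadow; approximating by rational polygons destroys the long-trajectory combinatorics, as you note. So what you have is a plausible (but still gapped) argument for a special case plus an accurate diagnosis of why the general statement is hard --- a useful commentary on the conjecture, but not a proof of it.
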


\paragraph{}
While the {\bf regular equilateral triangle}, the {\bf regular hexagon} the {\bf regular octagon}
or a right angle triangle, can be shown to satisfy the density condition, we do not know what happens already for 
a {\bf regular pentagon}. It might even hold for non-convex case as long as the illumination problem is solved there, meaning
that union of wave fronts covers the entire table up to a finite set of points. The Penrose table is a non-convex piecewise
smooth region with elliptic, circular and straight line boundary parts. No wave front now becomes dense there because placing
a light source at any point $P$ leads there to dark regions that can not be reached. 

\begin{figure}[!htpb]
\scalebox{0.68}{\includegraphics{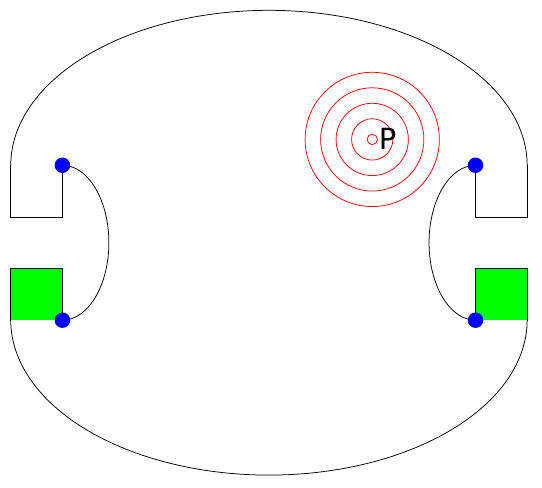}}
\label{counterexamples}
\caption{
In the Penrose table, there are for every $P$
(at least) two square parts of the table which are not reached by $W_t(P)$.
}
\end{figure}

\paragraph{}
As a consequence of the torus case, we can state the following result:

\begin{coro}
For the billiard in a square, the wave front of any point $P$ becomes dense. 
\end{coro}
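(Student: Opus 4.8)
The plan is to reduce the square billiard to the flat $2$-torus by the classical unfolding construction. Reflecting the unit square $Q=[0,1]^2$ across its sides tiles the plane by unit squares; the group $\Gamma$ generated by the reflections in the lines $\{x\in\mathbb{Z}\}$ and $\{y\in\mathbb{Z}\}$ has fundamental domain $Q$, and its translation subgroup is $(2\mathbb{Z})^2$. Hence the flat torus $T=\mathbb{R}^2/(2\mathbb{Z})^2$ carries a residual isometric action of $\Gamma/(2\mathbb{Z})^2\cong\mathbb{Z}_2\times\mathbb{Z}_2$ by $(x,y)\mapsto(\pm x,\pm y)$, and the ``folding'' quotient map $\pi\colon T\to T/(\mathbb{Z}_2\times\mathbb{Z}_2)=Q$ is the object of interest. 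First I would record the two properties of $\pi$ that matter: (i) $\pi$ is continuous, surjective and $1$-Lipschitz, being a quotient by isometries; and (ii) $\pi$ sends a unit-speed geodesic of $T$ to a billiard trajectory of $Q$ of the same length, a geodesic crossing one of the branch circles $\{x\in\{0,1\}\}$ or $\{y\in\{0,1\}\}$ projecting to a trajectory that reflects off the corresponding side. The only exceptions are geodesics running into one of the four points of $\{0,1\}^2\subset T$, whose images hit a corner of $Q$; these are precisely the trajectories disregarded in the definition of the billiard wave front.

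Next I would transport the wave front. Fix $P\in Q$ and a preimage $\tilde P\in\pi^{-1}(P)$. By property (ii), if $\tilde v$ is a lift of a direction $v$ then $\exp^Q_P(tv)=\pi\bigl(\exp^T_{\tilde P}(t\tilde v)\bigr)$ whenever the $T$-geodesic from $\tilde P$ in direction $\tilde v$ avoids the four corner points on $[0,t]$. Consequently
\[
 W^Q_t(P)\ \supseteq\ \pi\bigl(W^T_t(\tilde P)\setminus C_t\bigr),
\]
where $C_t\subset W^T_t(\tilde P)$ is the set of endpoints of the excluded geodesics. For each corner point $c$, the $T$-geodesics from $\tilde P$ to $c$ correspond in the universal cover $\mathbb{R}^2$ to the directions of the countable set $c+(2\mathbb{Z})^2$ seen from a lift of $\tilde P$; hence $C_t$ is countable.

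Finally I would invoke Theorem~1. After rescaling the Euclidean metric by $\tfrac12$ (which rescales lengths and wave fronts but not density) $T$ becomes the standard flat torus, so $W^T_t(\tilde P)$ becomes dense in $T$; moreover the proof of Theorem~1 gives the stronger quantitative fact that, for large $t$, a single sub-arc of $W^T_t(\tilde P)$ of positive length already crisscrosses $T$ and is $O(1/\sqrt t)$-dense. Now let $B_r(Q_1)\subset Q$ be a billiard ball and pick $\tilde Q_1\in\pi^{-1}(Q_1)$; since $\pi$ is $1$-Lipschitz, $B_r(\tilde Q_1)\subseteq\pi^{-1}(B_r(Q_1))$. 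For $t>\tau^T(\tilde P,r)$ the dense sub-arc of $W^T_t(\tilde P)$ meets the open ball $B_r(\tilde Q_1)$ in an arc of positive length; this arc is uncountable, so it is not contained in $C_t$, and applying $\pi$ shows $W^Q_t(P)\cap B_r(Q_1)\neq\emptyset$. Hence $W^Q_t(P)$ becomes dense, with $\tau^Q(P,r)=\tau^T(\tilde P,r)$.

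The main obstacle — really the only point needing care rather than bookkeeping — is the exceptional countable set $C_t$ of trajectories that fold into a corner of $Q$: one must ensure these ``missing'' directions do not destroy density. This is exactly where it pays off that Theorem~1 is established in the stronger form ``a positive-length arc of the wave front already becomes dense'': an arc of positive length is uncountable and therefore survives removal of the countable set $C_t$. Everything else — the unfolding, the $1$-Lipschitz bound supplying the time estimate, and the rescaling to the unit torus — is routine, so I do not expect further difficulty.
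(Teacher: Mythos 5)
Your proposal is correct and follows essentially the same route as the paper: unfold the square to the plane, pass to the torus $\mathbb{R}^2/(2\mathbb{Z})^2$, fold back via the tent map in each coordinate, and invoke the torus theorem (after rescaling). In fact you are more careful than the paper's one-paragraph proof, which does not explicitly address the countable set of corner-hitting directions; your observation that a positive-length dense sub-arc survives the removal of this countable set is exactly the right way to close that gap.
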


\begin{proof}
By unwrapping the table we get to a universal cover $\mathbb{R}^2$ of $Q=[0,1] \times [0,1]$.
The circle $S_t(P)$ centered at $P$ gets mapped into $K_t(P) = \pi(S_t(P))$, where $\pi$
is the map which first maps $(x,y)$ to $(x,y) \; {\rm mod} \; 2$, then applies the map 
$(x,y) = (q(x),q(y))$, where $q$ is the tent map $q(x)=x$, if $x \leq 1$ and $q(x)=2-x$ 
if $x \in [1,2]$. In short, the piecewise smooth exponential map 
$$  \pi(x,y) = (1-|(x \; {\rm mod} \; 2)-1|,1-|(x \; {\rm mod} \; 2)-1|) $$
maps the circle $S_t(P)$ to $K_t(P)$. 
\end{proof}

\begin{figure}[!htpb]
\scalebox{1.2}{\includegraphics{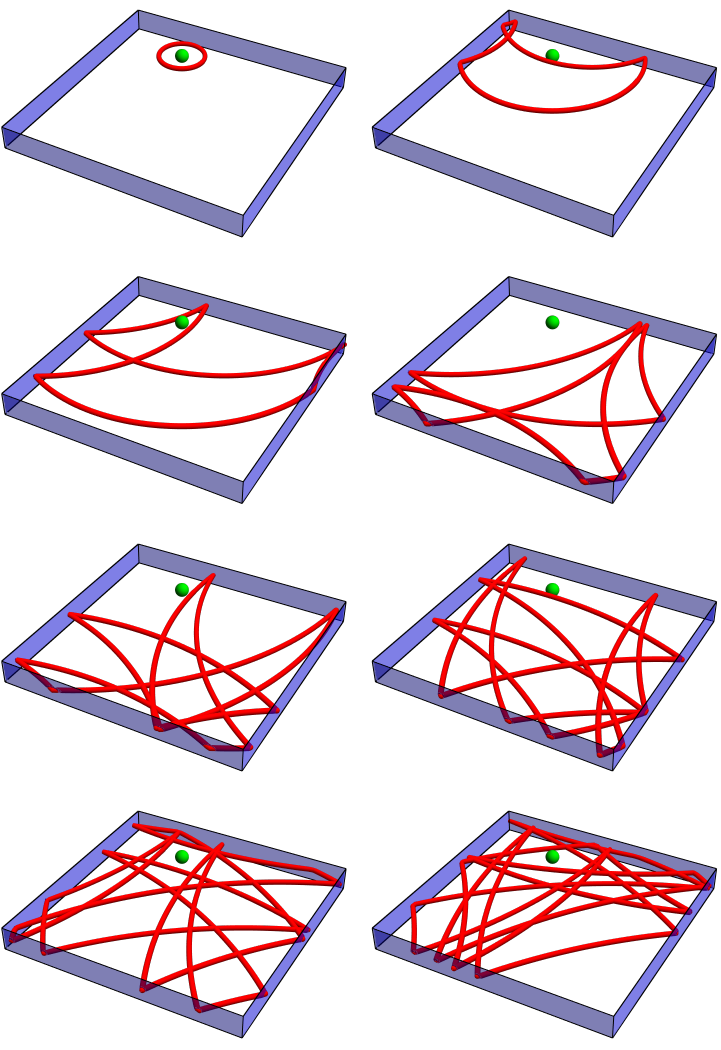}}
\label{Square billiard}
\caption{
Wave fronts $W_t$ in a square billiard. We prove that $W_t$ becomes
dense on the table. 
}
\end{figure} 

\paragraph{}
We were unable to find references which even {\bf ask} such wave density 
questions. But we suspect that it must have been on the mind of many 
researchers in the field, as some numerical simulations of wave front beams have been done
in \cite{berglund2021square,berglund2021uncentered,
berglund2021pentagon}. These animations show the beauty and complexity 
which can happen with wave fronts, even for simple situations like a 
circle or an ellipse. Related to wave fronts are caustics and the 
``Jacobi last theorem" type question of how many cusps such a caustic has.
See \cite{bor2024cusps}. 

\paragraph{}
Nils Berglund kindly informed us about 
\cite{Vicente2020,VerdiereVicente2020,VerdiereVicente2021},
where the question of the {\bf length of the wave fronts} is addressed. 
Vicente visualized the problem as ``Une goutte d'eau dans un bol", ("a drop of
water in a cup"). \cite{Vicente2020} showed that for the duck problem in the
unit disk, the length $|W_t|$ satisfies asymptotically 
$|W_P(t)| = \arcsin(|P|) t + o(t)$, in agreement with
$|W_O(t)|=O(1)$ for the origin $O$ and that the longest wave fronts are produced
at boundary of the pond. The length question is not so interesting on the flat torus
or rectangle or cube, where $|W_P(t)|=2\pi t$ for all initial points. One
would naively expect at first $|W_P(t)|-|W_Q(t)|=o(t)$ to hold in general for a pair of
points. The pond case shows that this is not true. It begs the question whether there
is an other 2-manifold with boundary where the growth rate $|W_P(t)|/t$ depends on $P$. 

\paragraph{}
One possible attack to the density of wave front problem is to focus
on {\bf periodic points} which exist for example in a Birkhoff billiard
and which are very much also under investigation in regular polygons
\cite{DavisLelievre}.

\section{Polyhedra}

\paragraph{}
The starting point of this investigation was a numerical simulation of the 
wave front on the unit cube. The second author has made movies of wave fronts
on the cube since 2011 for multi-variable courses to illustrate how quickly
simple open questions can appear in mathematics. A wave front on the 
torus can be seen on \cite{CFZ}. The problem of finding the smallest
connection between two points of a cube or to find the path of a billiard shot 
in a rectangle reaching from a point to one of the corners by reflecting a given
number of times on walls can all be solved by unwrapping the situation. This 
produces nice puzzles in the context of distance questions in multi-variable 
calculus courses, like finding the shortest connection between two points on a cube
or the length of a shortest billiard path in a rectangle that is required to 
bounce a pre-required number of times on each wall. 

\paragraph{}
We were fascinated to see that in the cube case, the 
wave fronts became more and more complicated and that the wave fronts $W_t(P)$ even 
lost connectivity. Passing through a corner can cut the wave front because the 
geodesic flow was no longer continuous there. In the universal cover of the cube, 
once several lattice points were crossed, the wave front already had a chance
of been hacked into pieces. Computer experiments indicated that the wave front becomes dense
even if we start at a corner. The biggest surprise was maybe that the lack of smoothness of the exponential map 
could leads to situations on polyhedra, where wave fronts become disconnected and complicated 
already after a relatively short time. 

\begin{coro}
On the surface $M$ of a cube, all wave fronts become dense.
\end{coro}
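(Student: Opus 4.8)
The plan is to reduce the cube surface to the flat torus, exactly as the square billiard was reduced to the torus in the preceding corollary, but now accounting for the non-commutativity of the symmetry group. First I would set up the unfolding. The surface of the unit cube, with its six vertices removed, is a piecewise flat 2-manifold; unfolding it across edges produces a developing map into the Euclidean plane $\mathbb{R}^2$. Tracking how a geodesic segment picks up a face-label and a rotation by a multiple of $\pi/2$ each time it crosses an edge, I would exhibit a covering of the punctured cube surface by a flat manifold built from $24 = 6\cdot 4$ open unit squares glued along edges (six faces, each in four rotated copies). The key combinatorial point is that this glued object is a torus (or a quotient covered by one): the holonomy around any loop avoiding the vertices lies in the cube group, and one checks that the relevant covering trivializes the holonomy enough that the developed image of the geodesic sphere $S_t(P)$ lives on a flat torus cover $\mathbb{R}^2/\Lambda$.

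Next I would transport the wave front. Fix $P$ on the cube surface (including the corner case, where we simply discard the finitely many geodesics that later hit a vertex). In the tangent space $T_P M \cong \mathbb{R}^2$ the geodesic sphere $S_t$ of radius $t$ is a round circle. The developing map sends this circle into the plane, and reducing modulo the lattice $\Lambda$ of the torus cover gives a curve $K_t(P)$ that is exactly one of the four rotated wave fronts $W_t^k$ on the torus, say $K_t(P) = W_t^0$. I would then partition $S_t \subset T_P M$ into the four arcs $S_t^0, S_t^1, S_t^2, S_t^3$ according to which rotated copy of a face the corresponding geodesic currently sits in, so that $\exp_P(S_t^0)$ is the genuine cube wave front $W_t(P)$ and the union $\exp_P(S_t^0)\cup\dots\cup\exp_P(S_t^3)$ pushes down to a union of four flat-torus wave fronts. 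By the Theorem on the flat 2-torus, this union becomes dense in the torus; projecting back down to the cube surface via the finite quotient map, the union of the four rotated cube wave fronts becomes dense on the cube. Finally, the cube group acts transitively enough on these four copies that $\exp_P(S_t^k)$ is, up to an isometry of the cube surface, the same object for each $k$; hence if the union of four congruent sets becomes dense, each one does, and in particular $W_t(P) = \exp_P(S_t^0)$ becomes dense. This gives the corollary.

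The main obstacle I expect is making the unfolding rigorous: proving that the $24$-square gluing really is (covered by) a flat torus, and that under this identification the developed geodesic sphere reduces to an honest wave front on that torus rather than some distorted image. The subtlety is that the cube group is nonabelian of order $24$, so unlike the square case one cannot just quotient by a translation lattice — one must verify that the chosen four-fold bookkeeping of rotations is consistent (well-defined independent of the path taken, away from vertices) and that the resulting flat surface has trivial rotational holonomy, i.e.\ is a torus. One also needs the ``by symmetry'' step to be genuine: that the four sets $\exp_P(S_t^k)$ are pairwise related by isometries of the cube fixing $P$'s orbit appropriately, so that density of their union forces density of each. Once that structural claim is in place, the density itself is immediate from the torus theorem, and the corner case $P$ a vertex costs nothing since removing finitely many geodesics does not affect whether the remaining wave front is dense.
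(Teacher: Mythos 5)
Your proposal follows essentially the same route as the paper: unfold the punctured cube surface into a $4{:}1$ flat cover built from $24$ squares, partition the circle $S_t \subset T_PM$ into four sets whose images are the four rotated fronts $W_t^0 \cup W_t^1 \cup W_t^2 \cup W_t^3$ forming a union of torus wave fronts, invoke the torus theorem for the union, and use the $\pi/2$-rotation symmetry to pass from the union back to the single front $W_t(P)=\exp_P(S_t^0)$. The two soft spots you flag (consistency of the rotational bookkeeping, and that density of a union of four congruent sets really forces density of each piece) are exactly the points the paper also leaves informal; for the latter, the paper's stronger form of the torus theorem --- that every positive-length arc of the wave front already becomes dense --- is the natural tool to close the gap.
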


\paragraph{}
Fix $P \in M$. It is allowed to be one of the corners of the cube $M$. 
Let $Q$ be a second point on the cube and assume $\epsilon>0$ is given. 
We will show that for every
$t>144/\epsilon^2$, the wave front $W_t(P)$ intersects with the $\epsilon$ 
neighborhood of $Q$. Each $P$ defines a set of measure zero on $M$ such that 
if $Q$ is not there, then any of the countably many possible geodesic connecting
$P$ with $Q$ is defined for all times and does not pass through any of the corners of $M$.

\paragraph{}
The {\bf net} $N$ of the cube $M$ is defined as a union of 6 translated squares 
$[k,k+1) \times [l,l+1)$ in $\mathbb{R}^2$. We can draw it as a union of squares $A,B,C,D,E,F$ in 
what we call the {\bf large torus} $[0,4) \times [0,4)$.
Without loss of generality, the origin of the wave $P$ can be chosen to be in $A$. As there is a 
folding correspondence between $N$ and $M$, there is a point $Q$ on $N$ that belongs to $Q$ in $M$. 

\paragraph{}
Theorem~1 showed that for $3/\sqrt{t}< \epsilon/2$, the wave front
$W_t(P)$ on the small torus $\mathbb{T}^2/\mathbb{Z}^2$ 
covers $\mathbb{T}^2$ $(\epsilon/2)$-dense.
Scaling implies that for $6/\sqrt{t/4}<\epsilon$, every $(\epsilon/2)$ -
neighborhood of $W_t(P)$ covers the torus $\mathbb{R}^2/(4 \mathbb{Z})^2$ of side length $4$. 
This means that for every $t>36/(\epsilon/2)^2$, the set $B_{\epsilon/2}(Q) \cap W_t(P)$ is not empty.
The wave front $W_t(P)$ on the large torus is now $\epsilon/2$ dense. It is therefore
$(\epsilon/2)$-dense also on each of the squares $\{A,B,C,D,E,F\}$.

\paragraph{}
Draw the {\bf circular wave front} $S_t(P) = \{X, ||X-P|| =t \}$ on the cover $\mathbb{R}^2$ of the large torus. 
To every $X \in S_t(P) \subset \mathbb{R}^2$ for which $PX$ 
does not pass through a lattice point, we can associate a group element $\gamma(X)$ in the 
{\bf octahedral group} $G$ that is isomorphic to the group of permutations of the 4 space diagonals.
$\gamma(X)$ tells to which of the 24 oriented faces the point $X$ belongs in $M$ and so in $N$.
The map $\gamma: S_t(P) \to G$ is complicated. Figure~(10) illustrates this 
Almost every $X \in S_t(P)$, when lifted on $S_t(P) \times G$, encodes
where the corresponding $W_t(P)$ is on the cube and where it is located in the net $N$. 
Crossing the boundary of a face updates the group. Crossing from $A$ to $B$ defines a group element $g_{A,B}$.
This happens for every pair of squares $A',B' \subset \mathbb{R}^2$ which satisfy $\pi(A')=A,\pi(B')=B$. 
Rotations could be realized by transitioning around a corner $B \to A \to F \to B$. Again, this
also happens on any of the lifts in $\mathbb{R}^2$: for example if $X=(x,y)$ is given somewhere on $S_t(P)$ and $\pi(X) \in A$
with $\gamma((x,y))=g$, then $\gamma((x+1,y)) = g_{A,B} \circ g$. 
(The program that computed Figure~5 did evolve the real line $t(\cos(\alpha),\sin(\alpha))$ for a 
hundred thousand directions $\alpha$ and kept track of the group element in the form of a 
face number and basis to draw the wave onto the faces.) 

\paragraph{}
Almost every point $X \in W_t(P) = \pi(S_t(P))$ on the wave front in the large torus $[0,4) \times [0,4)$ is now associated 
to a unique group element $g=\gamma(X) \in G$. It especially is defined for every point in $N \cap W_t(P)$ on the net $N$. 
The pair $(X,g)$ allows to recover the position of the corresponding point on the cube $M$. 
We know that for $Q \in N$, there is a point $X$ on the toral wave front $W_t(P)$ in $N$, such 
that $||Q-X|| \leq \epsilon/2$. This point $X$ might correspond to the wrong oriented face on the cube.
But $X$ being on the wave front, it has a group element $\gamma(X) \in G$ 
attached. For every $g \in G$, there is an other point $X_g$ in $N$, which is a translated and rotated version of $X=X_{\gamma(X)}$. 
While the points $\{X_g\}_{g \in G}$ are not on $W_t(P)$ in general, there is for each $X_g$ a point $Y_h \in N \cap W_t(P)$,
such that $||X_g-Y_h|| \leq \epsilon/2$. Now $||Q-Y_h||  \leq ||Q-X_g|| + ||X_g-Y_h|| \leq \epsilon/2 + \epsilon/2 = \epsilon$ in $N$ and 
$Y_h$ is on the wave front $W_t(P)$ in $N$. If $N$ is folded onto $M$ isometrically, we see the point $Y_h$ on $M$ that is 
$\epsilon$ close to $Q$. 

\begin{figure}[!htpb]
\scalebox{0.2}{\includegraphics{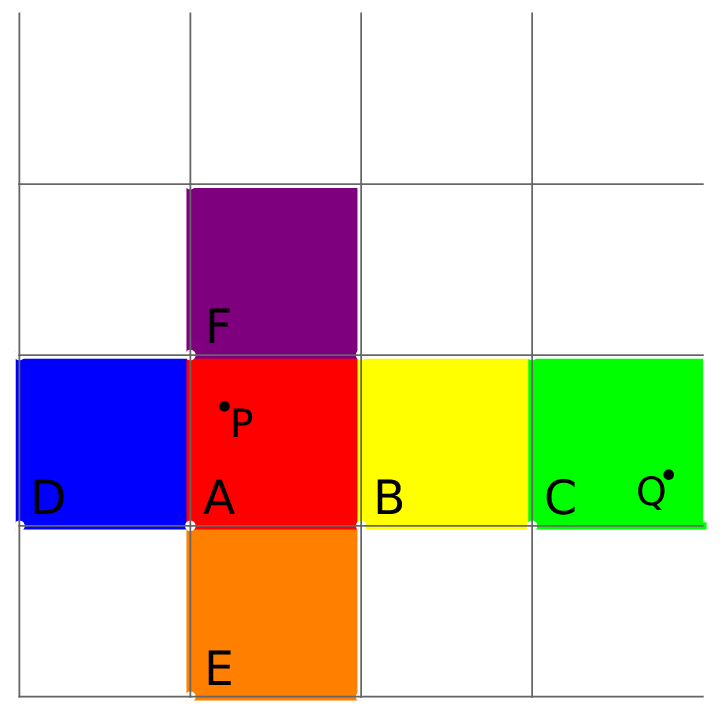}}
\scalebox{0.08}{\includegraphics{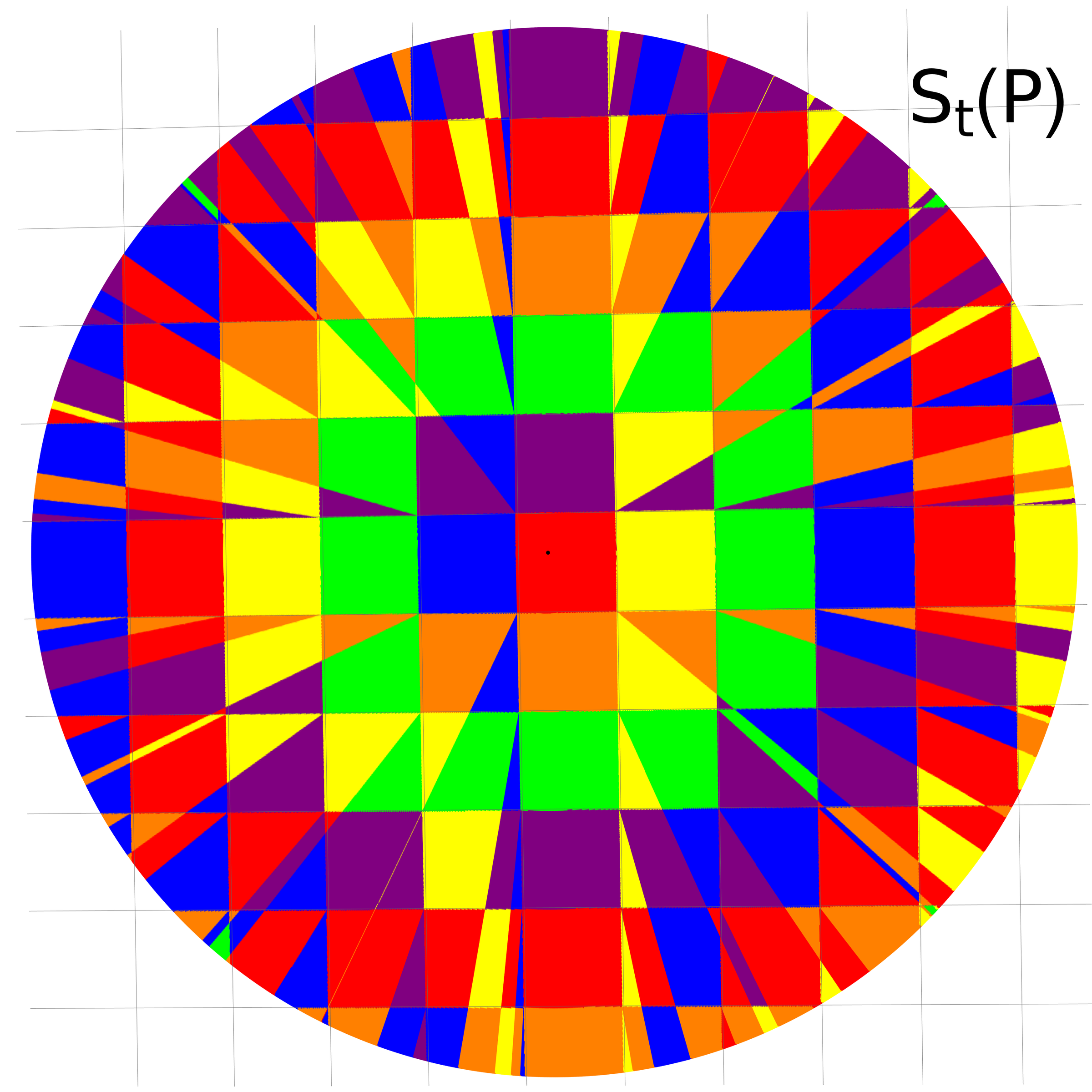}}
\label{cubeexplanation}
\caption{
To the left, we see the net $N$ inside the large torus $[0,4) \times [0,4)$.
The net $N$ is $M$ that is cut and flattened onto $\mathbb{R}^2$.
To the right, the color on an eligible line through $P$ encoding the face
number of $M$.
}
\end{figure}

\begin{figure}[!htpb]
\scalebox{0.33}{\includegraphics{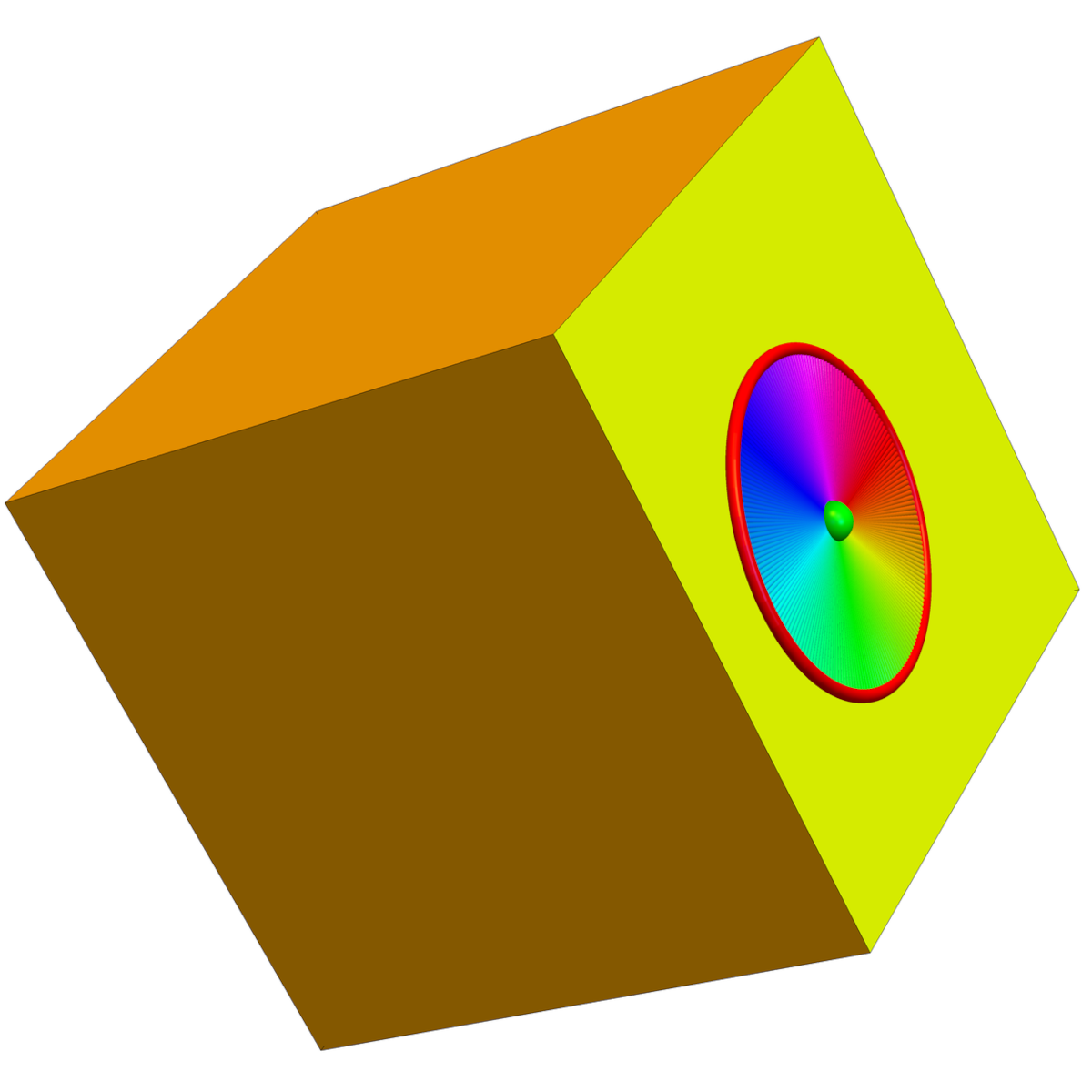}}
\scalebox{0.33}{\includegraphics{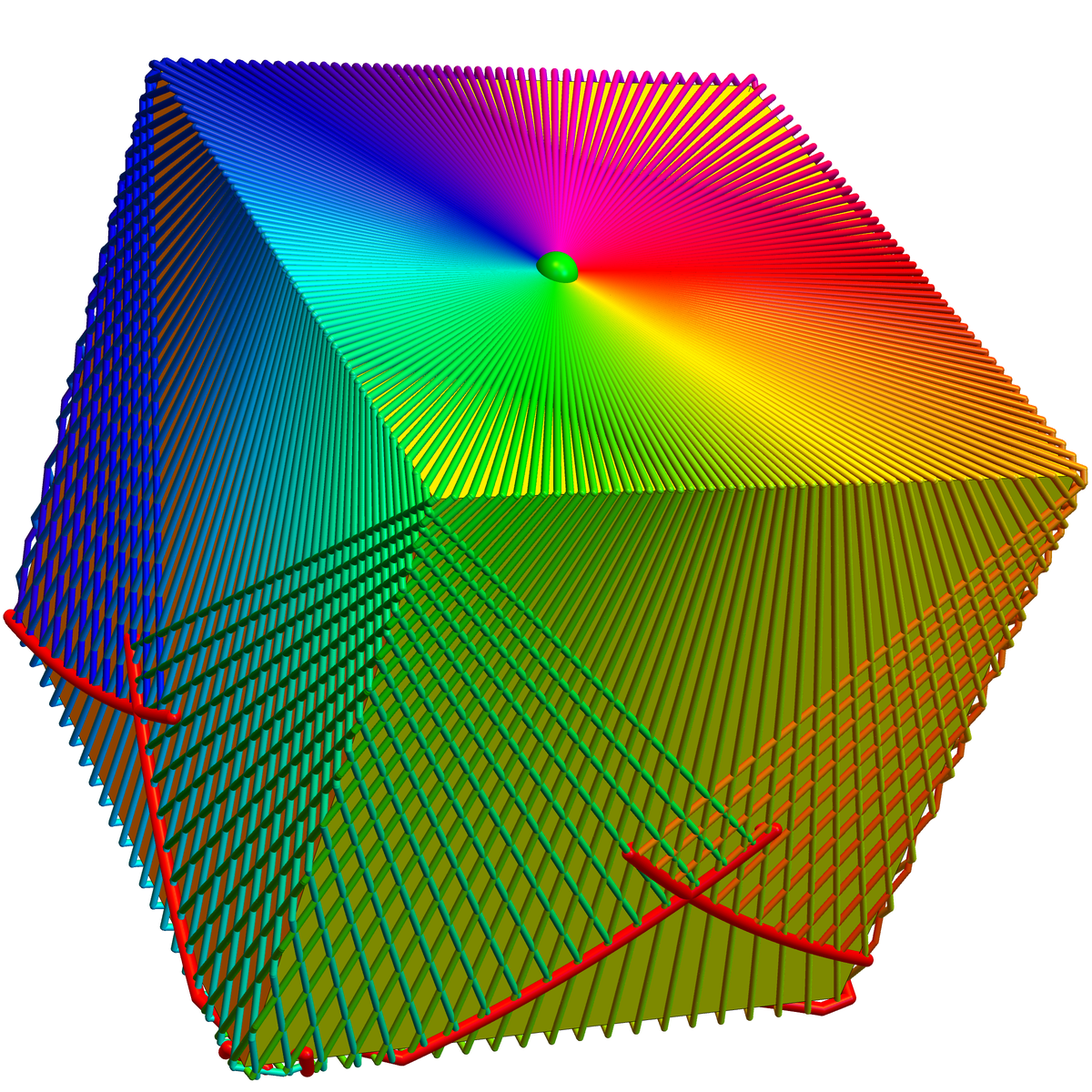}}
\scalebox{0.33}{\includegraphics{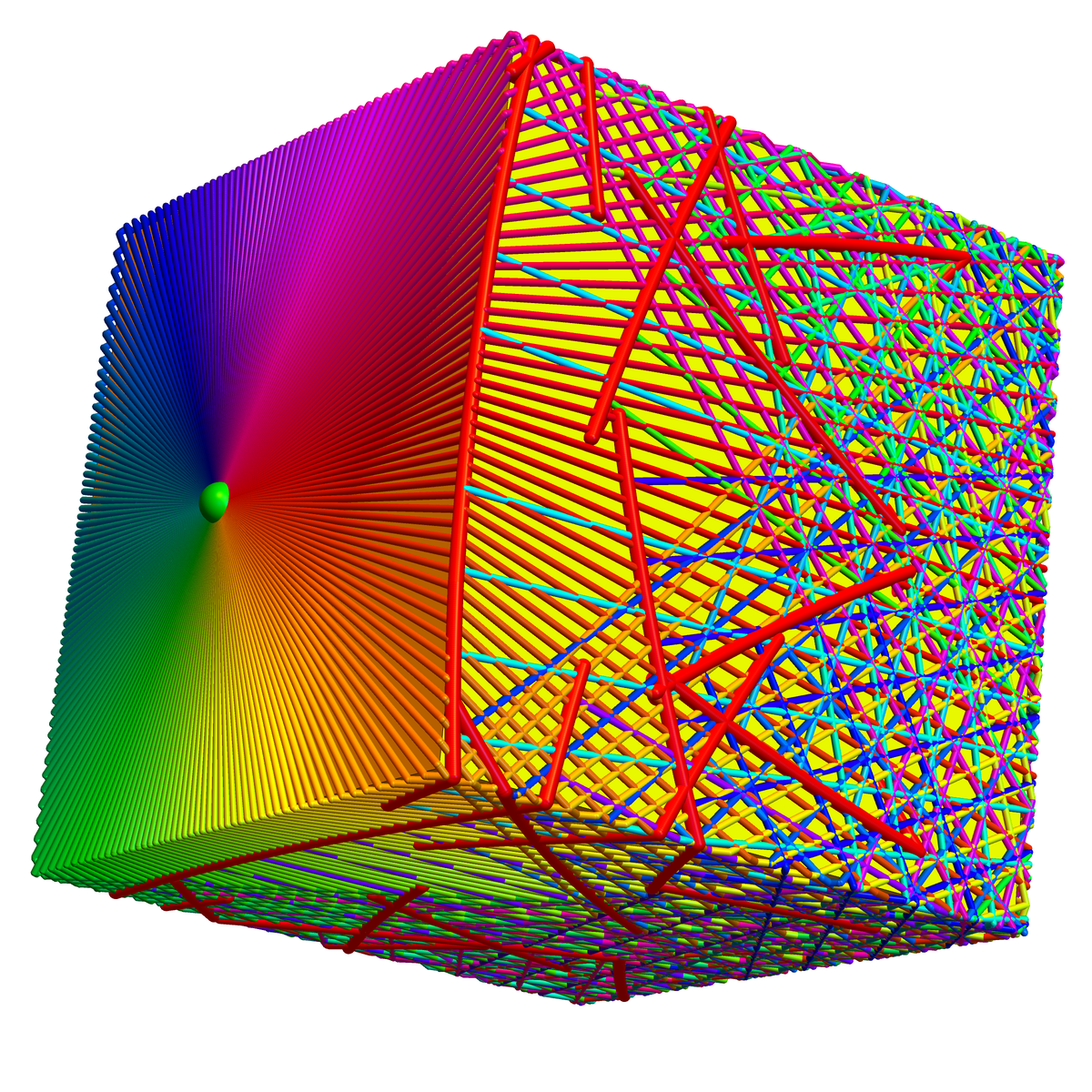}}
\scalebox{0.33}{\includegraphics{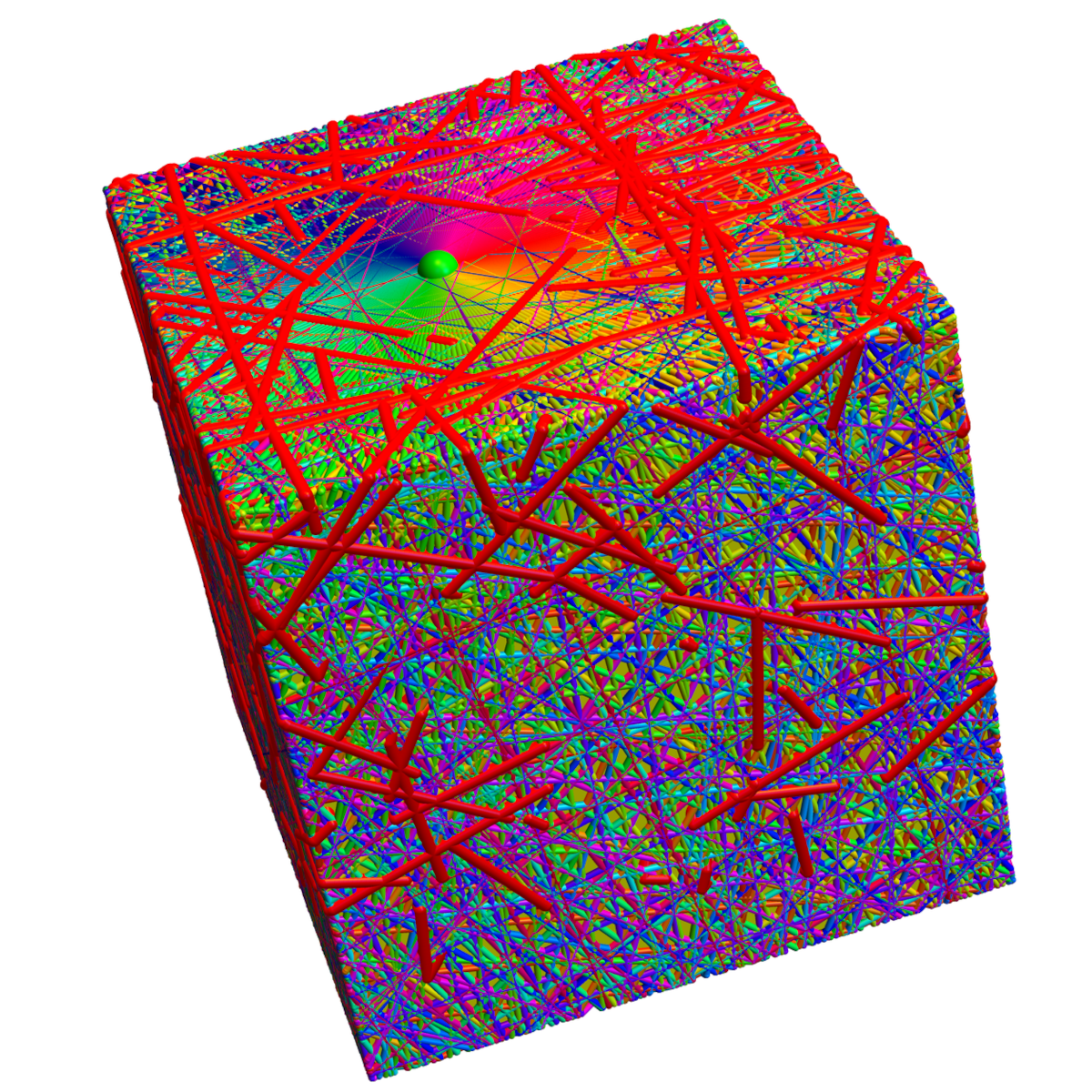}}
\label{Cube Wave Fronts}
\caption{
Wave fronts $W_t$ on a cube. We prove that 
they become dense. 
}
\end{figure}  

\section{Existing Literature}

\paragraph{}
The study of {\bf wave front propagation} in Riemannian geometry
intersects with other areas of mathematics like geometric number 
theory, partial differential equations and dynamical system theory.
The case of a torus also connects with the geometry of numbers
or Ehrhart theory. These theories that shed light on the distribution and density of 
lattice points if the solid is scaled. 
As an example, we mention Huxley and Nowak's work on primitive lattice points in convex planar 
domains \cite{huxley1996primitive} that provides a detailed analysis of the distribution 
of these points within a compact convex subset of $\mathbb{R}^2$. 
There are intricate relationships between the geometry of the domain and the associated 
error terms. This study offers foundational insights that can be extended to understand 
how wave fronts might distribute over more complex surfaces.
In the geometry of numbers \cite{lax2000geometry}, one explores the relationship between 
geometric structures and lattice point distributions within the context of dynamical systems, 
particularly focusing on how curvature and boundary shape influence the number of lattice points. 
Waxman and Yesha's recently looked at lattice points within thin sectors \cite{waxman2023number}. 
How does the number of integer lattice points within shrinking sectors of a circle grow? Such
problems are situated at the intersection of analytic number theory and geometric analysis. The 
Diophantine properties of the slope of the sector's central line affect the distribution of lattice points. 
Such studies could help to understand wave front behavior in more constrained geometric settings
such as rectangular billiards or on the surface of a cube, where the geometric structure imposes 
additional complexity.

\paragraph{}
The problem of wave fronts is also related to the {\bf illumination problem}: what
is the set of points $Q$ such that the wave front $W_t(P)$ never reaches $Q$? 
This question was first asked the 1950s. In 1958, Roger Penrose constructed the 
{\bf Penrose un-illuminable room}. In 1995, the first polygonal unilluminable
room was found \cite{tokarsky1995polygonal}.
For a general polygon, this is not an empty set in general, but it is maximally a finite set of points 
in general \cite{wolecki2024illumination}.
For recent work on translation surfaces see \cite{LelievreMonteilWeiss2016}. 
(Added: January 26th: The Penrose example already shows that there are regions for which entire
parts of the room cannot be illuminated, implying the wave front can not
be dense. Thanks to Sergei Tabachnikov to point this out to us.) 
On the other hand, having a room that can be illuminated does not
prove that fronts become dense. The duck in the center of a 
circular pond illustrates this. 

\paragraph{}
An obvious relation of wave front dynamics is with partial differential equations,
like wave dynamics. If we have a Riemannian manifold we can look at the wave 
dynamics on it. If a solution to this partial differential equation 
starts localized at a point, then the wave front is related to the 
wave front given by the geodesics. This is related to the {\bf Huygens principle}. Wave dynamics
is also closely linked to quantum mechanics because if $L$ is the Laplacian on the 
manifold and $L=D^2$, then the wave equation $u_{tt} = -Lu$ is by d'Alembert
equivalent to two Schr\"odinger type equations $u_t = -i Du$ and $u_t=+iD u$ for the 
square root $D$ of the Laplacian. This motivates semi-classical analysis 
\cite{cardinzanelli2017schrodinger}. There are connections between high energy 
eigenfunctions of the Laplacian and the geodesic or billiard dynamics
\cite{jakobson1997laplacian}. We are not aware of any relation between the wave front
spread and the eigenfunctions of the Laplacian. 

\paragraph{}
In Riemannian geometry, a manifold is called a {\bf Wiedersehen manifold} if for every point $P$
there is a {\bf Wiedersehen point} $Q$ such that all geodesics through $P$ pass also through $Q$.
Wilhelm Blaschke conjectured in 1921 \cite{blaschke}, that every
Wiedersehen manifold is a sphere. Only in 1963, Green proved that a 2-dimensional Wiedersehen manifold
(Wiedersehenfl\"ache) must be isometric to a sphere of constant curvature. 
See \cite{Klingenberg1978} Theorem 5.2.5. 
Note that for a projective plane of constant curvature, 
the point $Q$ is the same than $P$ so that it is technically not called a
Wiedersehen manifold. A natural question is whether it is true that for any non-Wiedersehen manifold (different
from a projective space situation), there exists a point $P$ for which the wave front $W_t(P)$ 
becomes dense and that for any d-manifold with or without boundary that does not allow for a 
$SO(d-1)$ symmetry, the wave front becomes dense for all points $P$.
An example of a $d=2$-manifold with boundary is the unit ball which carries a $SO(1)$ rotational
symmetry. An other example is an ellipsoid of revolution which again carries a $SO(1)$ rotational
symmetry and for which there exist points $P$ which have a Wiedersehen point $Q$. 

\paragraph{}
We have already mentioned that in general, we expect wave fronts
to become dense. What about specific situations in higher dimensions, like in the case
of billiards in a regular polytop? In the case of a polytop that 
tesselates Euclidean space, we can prove the density by projection.
How can one prove for a regular polytop that $W_t(P)$ becomes dense?  
A good place to start would be a solid regular dodecahedron.
The proofs given here do not apply.
Still, we feel that there should be a simple argument proving that 
wave fronts are dense in {\bf any convex polytop} of dimension 2 or larger. 

\paragraph{}
Already in the flat case, billiards can show chaotic motion. 
The wave front length then can grow exponentially in time. We expect
the wave front in such a case to be $e^{-C t}$ dense for some constant $C$.
Can one prove this? A case to consider would be the Bunimovich \cite{Bun79}
stadium, which is the boundary of the convex hull of the union 
of two circles of equal radius which do not have the same center. 
We would expect there that there is a $c>0$ such that 
every ball of radius $B_{e^{-c t}}$ intersects $W_t(P)$ for large
enough $t$. This does not follow from the strong mixing property of the 
billiard flow. Strong mixing means that for two sets $U,V$ of positive area in the phase
space (the area is normalized to $1$ in order to have a probability space) that
$\lim_{t \to \infty} |T_t(U) \cap V| = |U| |V|$. In probability theory, one
would rephrase mixing that the random variables $1_{T_t(U)}$ becomes uncorrelated to
the random variable $1_V$ in the limit $t \to \infty$. 

\paragraph{}
An other case, where we expect that a simple argument could 
show density of $W_t(P)$ with a rate $e^{-Ct}$ are 
compact hyperbolic manifolds on which the geodesic flow has
long been known to be ergodic \cite{EberhardHopf1932}. 
In the hyperbolic case, by a theorem of Hadamard, the wave fronts do not develop 
singularities. There are metrics on the 2-sphere for which the 
flow is ergodic. This certainly should also indicate that the 
wave fronts become dense, but we are not aware of an argument 
to see this. 

\paragraph{}
There would be other questions. 
In the cube case we have seen that the fronts $W_t$ becomes disconnected
pretty fast. The total length of the curve of course is $2\pi t$.
What is the typical length of a connected component, where we
only consider {\bf connected components} $\exp_t(I)$ with $I$ an interval
in unit sphere $S \subset TM_P=\mathbb{R}^2$, such that 
$\exp_s(I)$ is a connected curve for all $s \leq t$?
These floating pieces get disconnected even more
but also start to intersect with other such curves.
Is it possible that the set $W_t$ can become connected again 
for some large $t$?

\section{More experiments}

\paragraph{}
A referee observed that we have given little numerical evidence for the conjectures. 
In order to illustrate the situation, we add two pictures. The first is a test
case in billiards. It is related to the dodecahedron case. The second is an 
example of a two-dimensional Riemannian manifold, where we chose a non-flat metric.
They are examples of many experiments we have done over
the years, both with billiards as well as with geodesic flows.

\paragraph{}
Let us look at the situation of a regular pentagon, where
the unfolding produces a Veech surface has genus 2 \cite{Davis2016}. 
Periodic trajectories have slopes in the golden field 
$\mathbb{Q}(\sqrt{5})$. All orbits are either periodic or uniquely ergodic. 
We feel that this example could be a case, where the wave front question is 
accessible. As in any polygon, wave front length is $|W_t(P)|=2\pi t$.

\begin{figure}[!htpb]
\scalebox{0.1}{\includegraphics{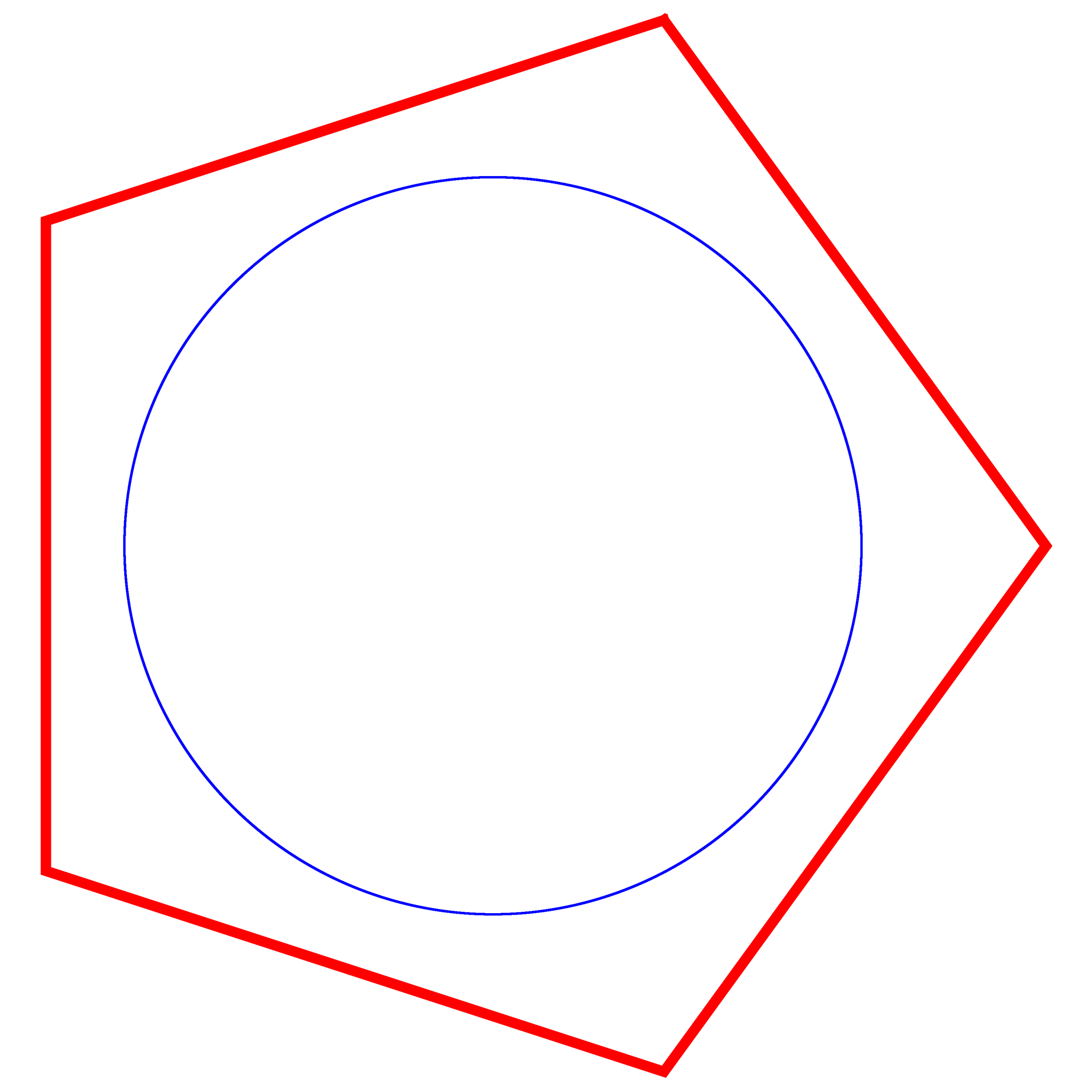}}
\scalebox{0.1}{\includegraphics{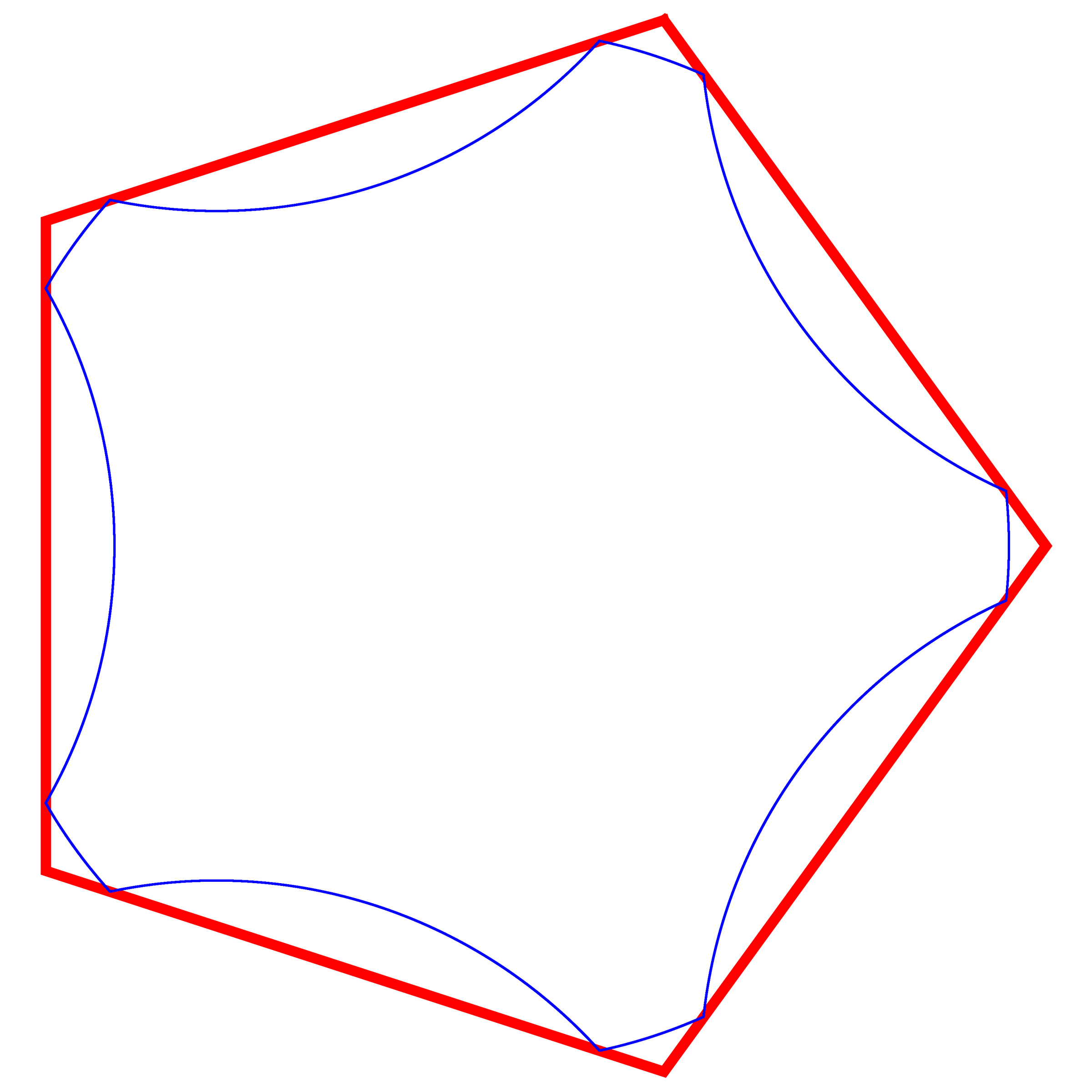}}
\scalebox{0.1}{\includegraphics{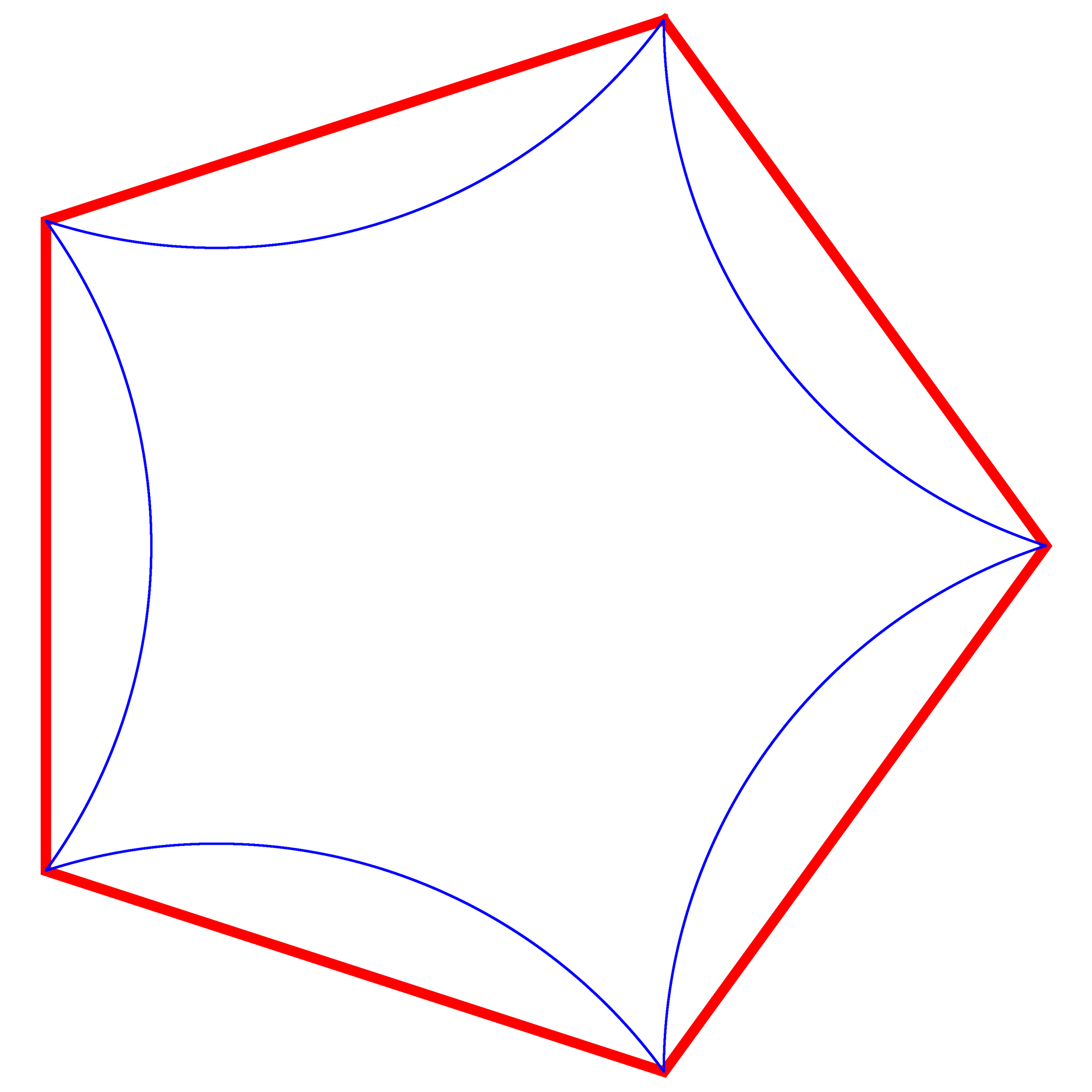}}
\scalebox{0.1}{\includegraphics{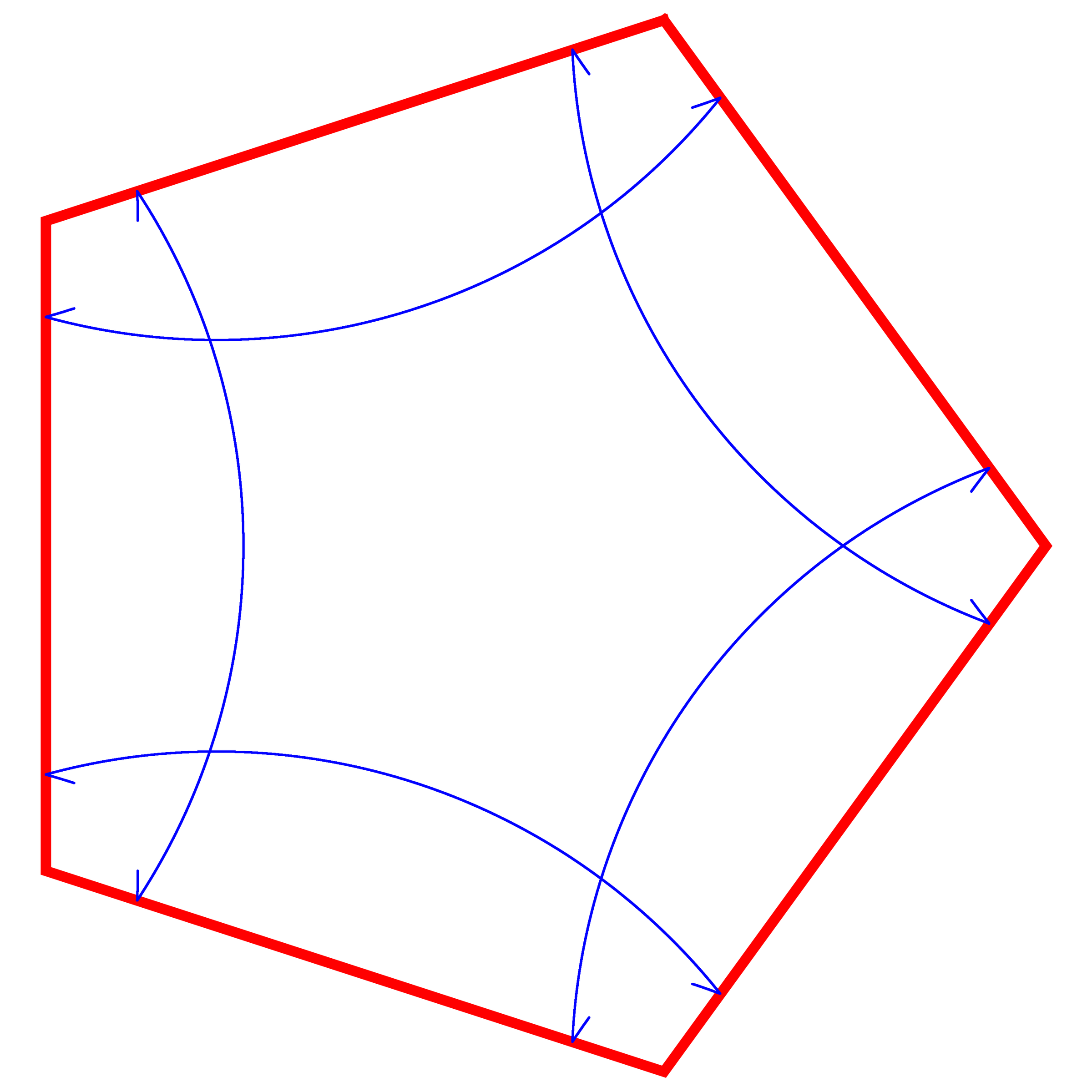}}
\scalebox{0.1}{\includegraphics{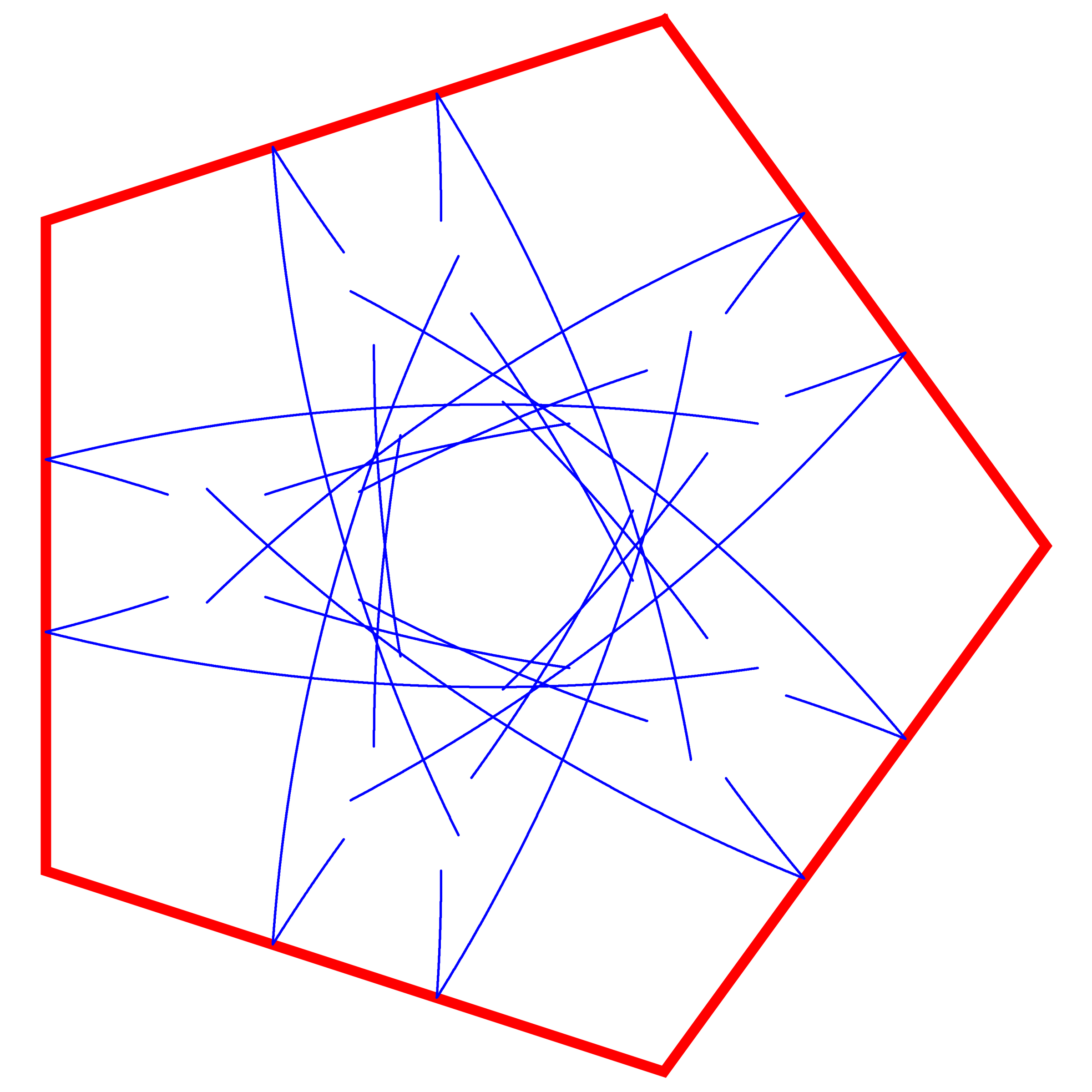}}
\scalebox{0.1}{\includegraphics{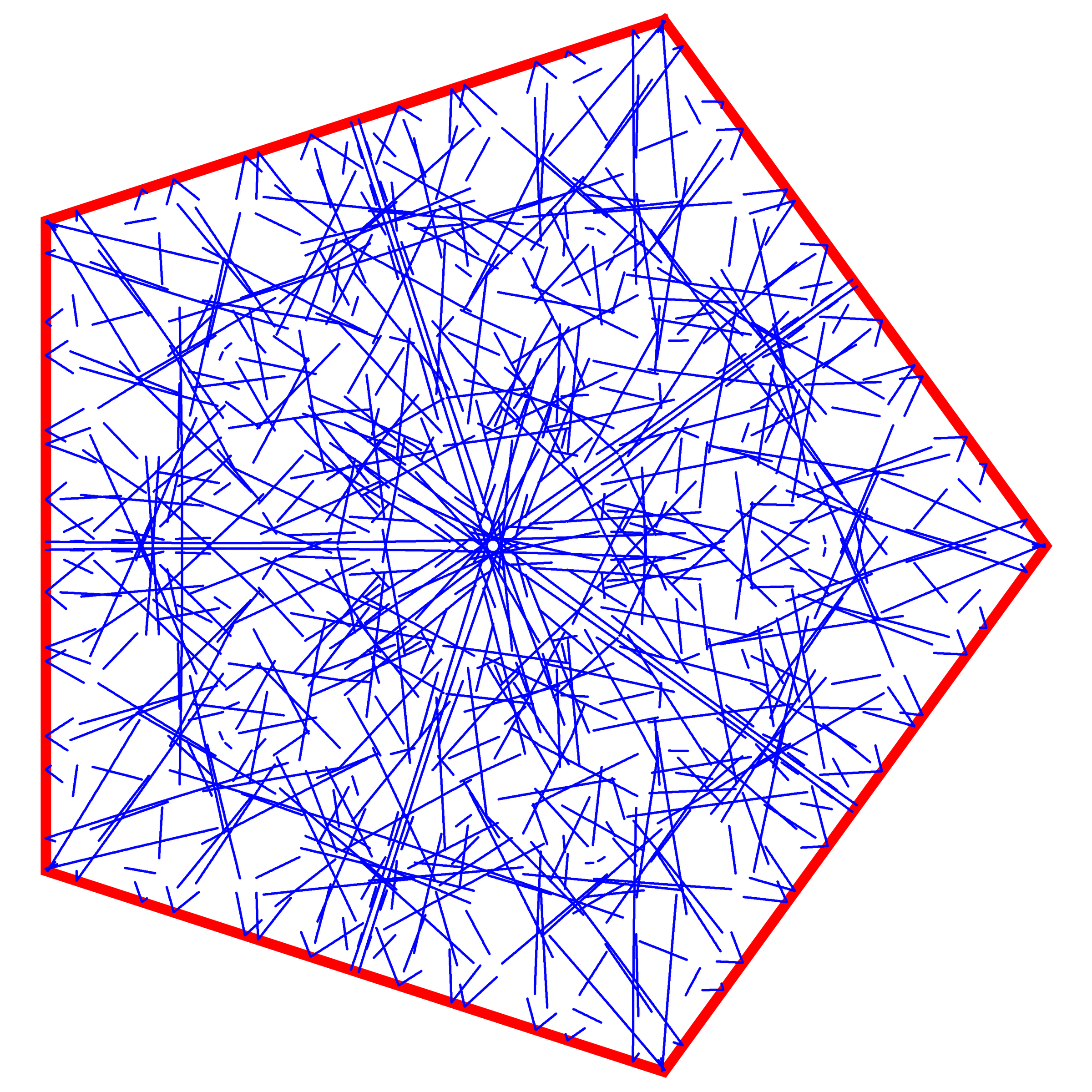}}
\label{Pentagon}
\caption{
We see wave fronts starting at the origin in a regular pentagon of side length $1$. 
The radii of the fronts are $r=0.66, 0.93, 1, 1.16, 3.33, 6.66, 20$. At $r=1$, when $W_t(P)$ reaches
a vertex for the first time, this cuts the wave front. For these pictures, we evolved 
100'000 billiard trajectories.
}
\end{figure}  

\paragraph{}
Lets look now at the situation with a non-flat smooth metric 
$$ g=\left[ \begin{array}{cc} a^2 \cos^2(u)+1 & -a \cos(u) \cos(v) \\ -a \cos(u) \cos(v) & \cos^2(v)+1 \\ \end{array} \right] $$
on the torus $M=\mathbb{T}^2=\mathbb{R}^2/(2\pi \mathbb{Z})^2$. The curvature is 
$-a \sin(u) \sin(v)/(1+a^2 \cos^2(u) + \cos^2(v))^2$.  The pictures show the case with $a=1.2$. 
We then integrate numerically the {\bf geodesic differential equations}
$\ddot{x}^k = - \sum_{i,j} \Gamma_{ij}^k \dot{x}^i \dot{x}^j$ 
with initial condition $x(0)=p, x'(0)=v$ for 100'000 directions.

\begin{figure}[!htpb]
\scalebox{0.07}{\includegraphics{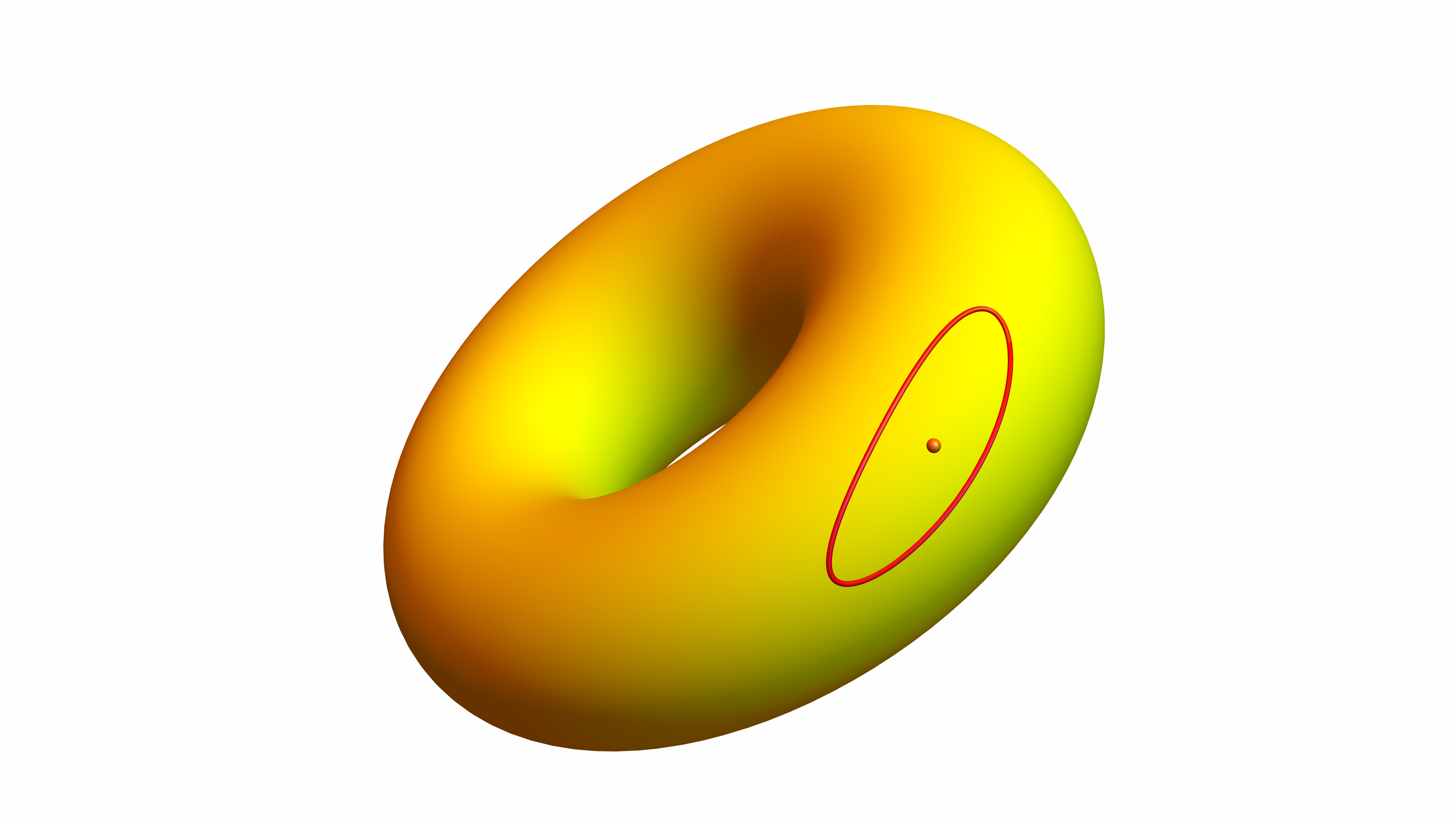}}
\scalebox{0.07}{\includegraphics{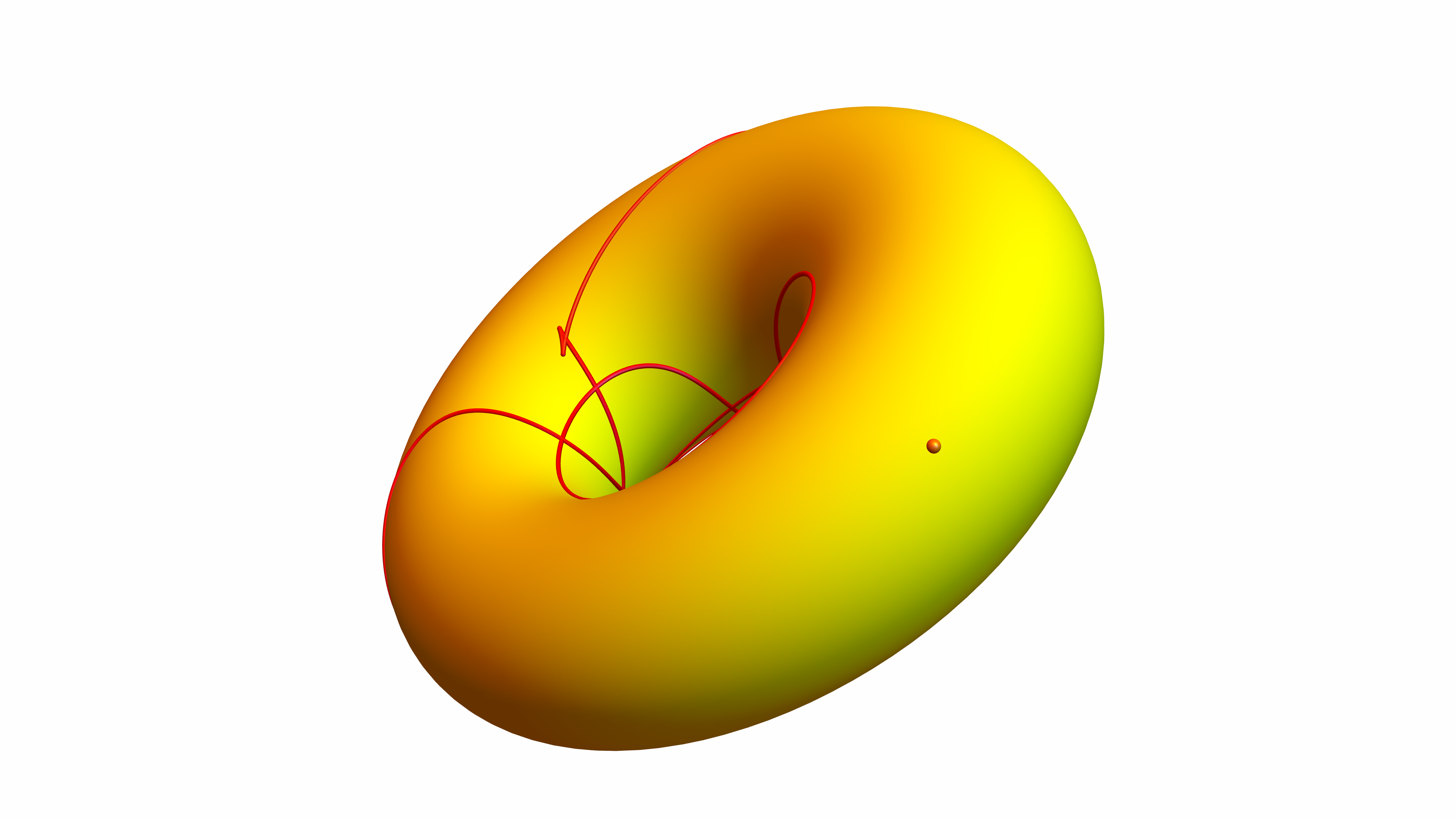}}
\scalebox{0.07}{\includegraphics{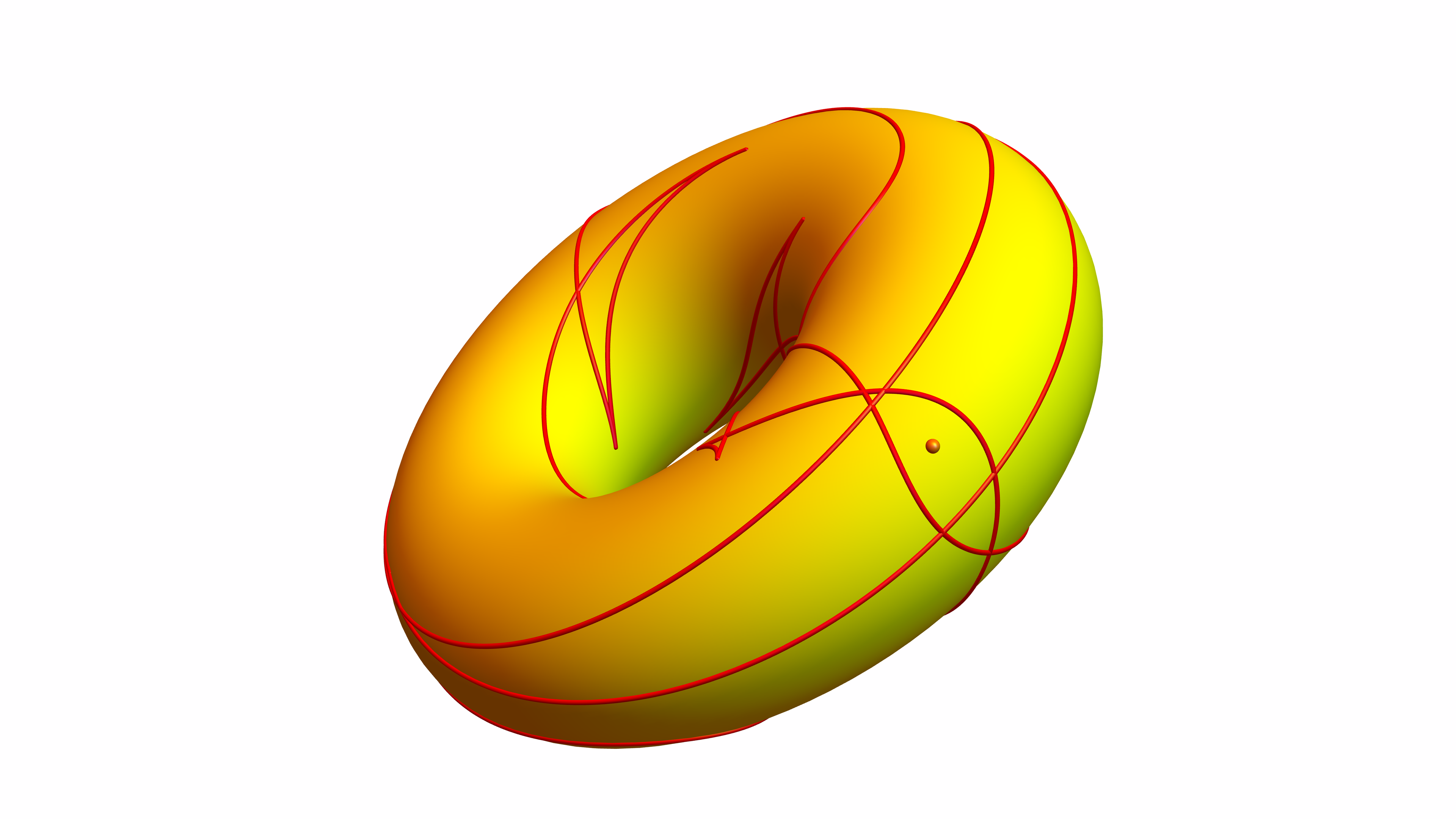}}
\scalebox{0.07}{\includegraphics{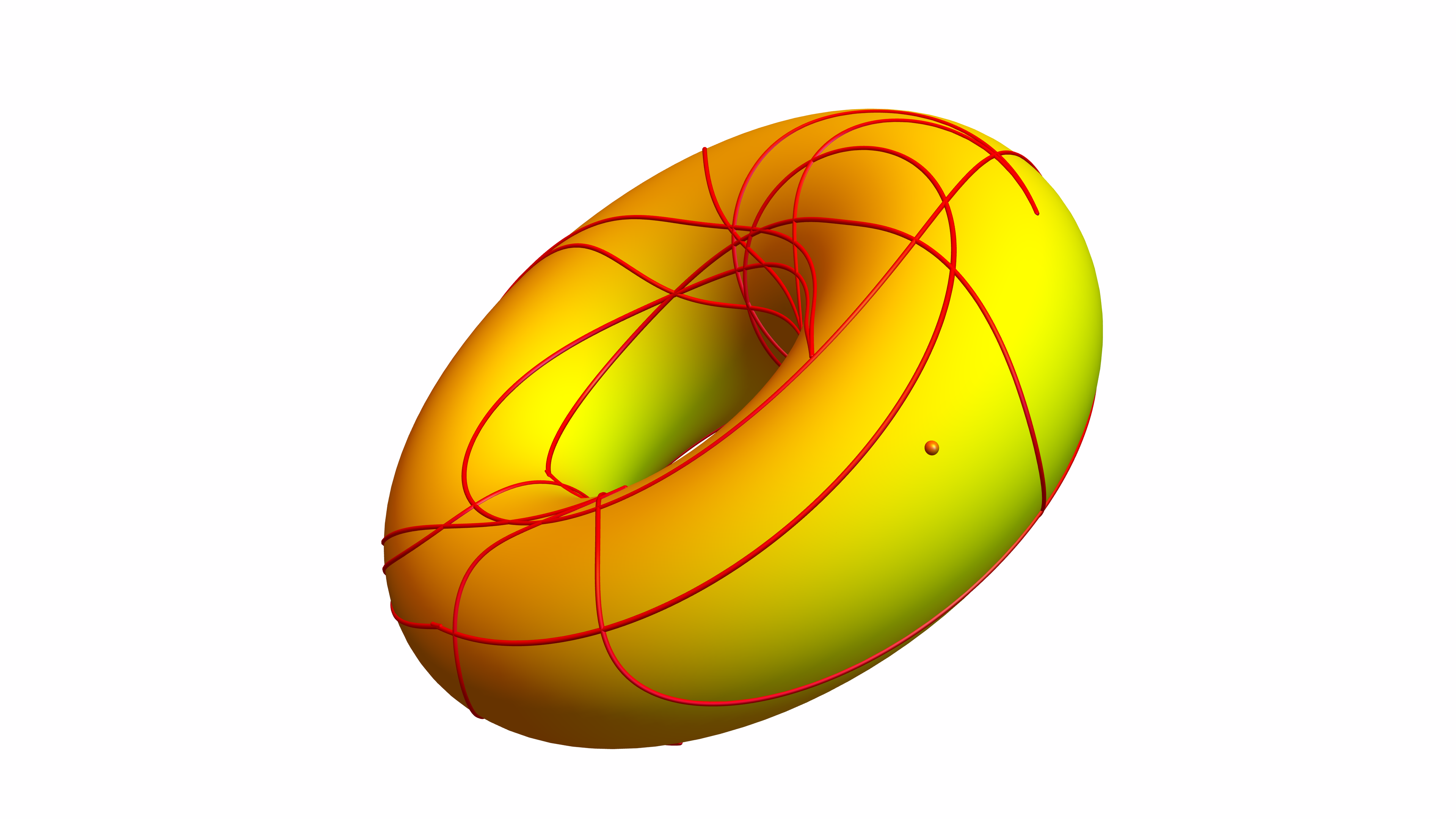}}
\scalebox{0.07}{\includegraphics{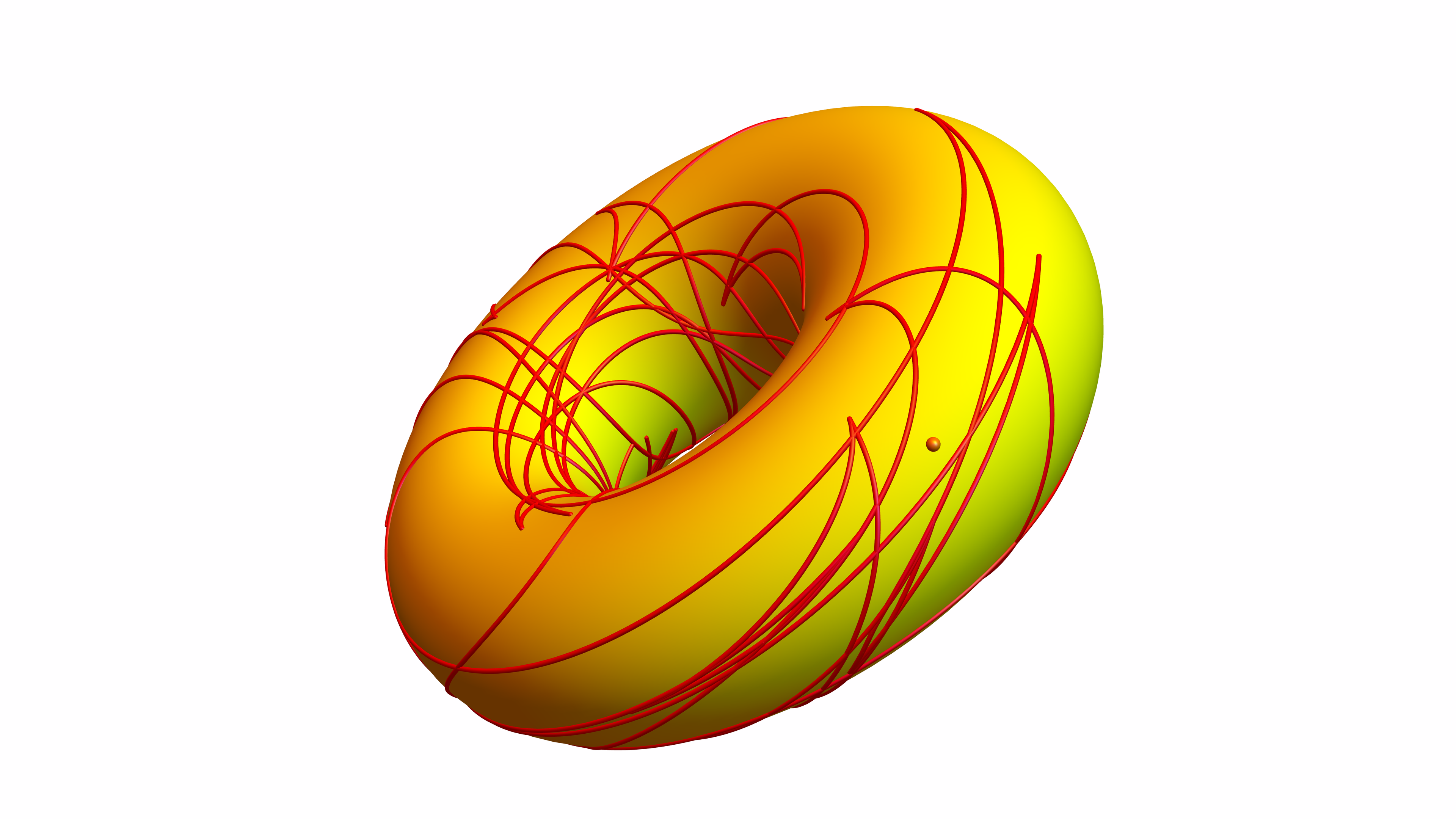}}
\scalebox{0.07}{\includegraphics{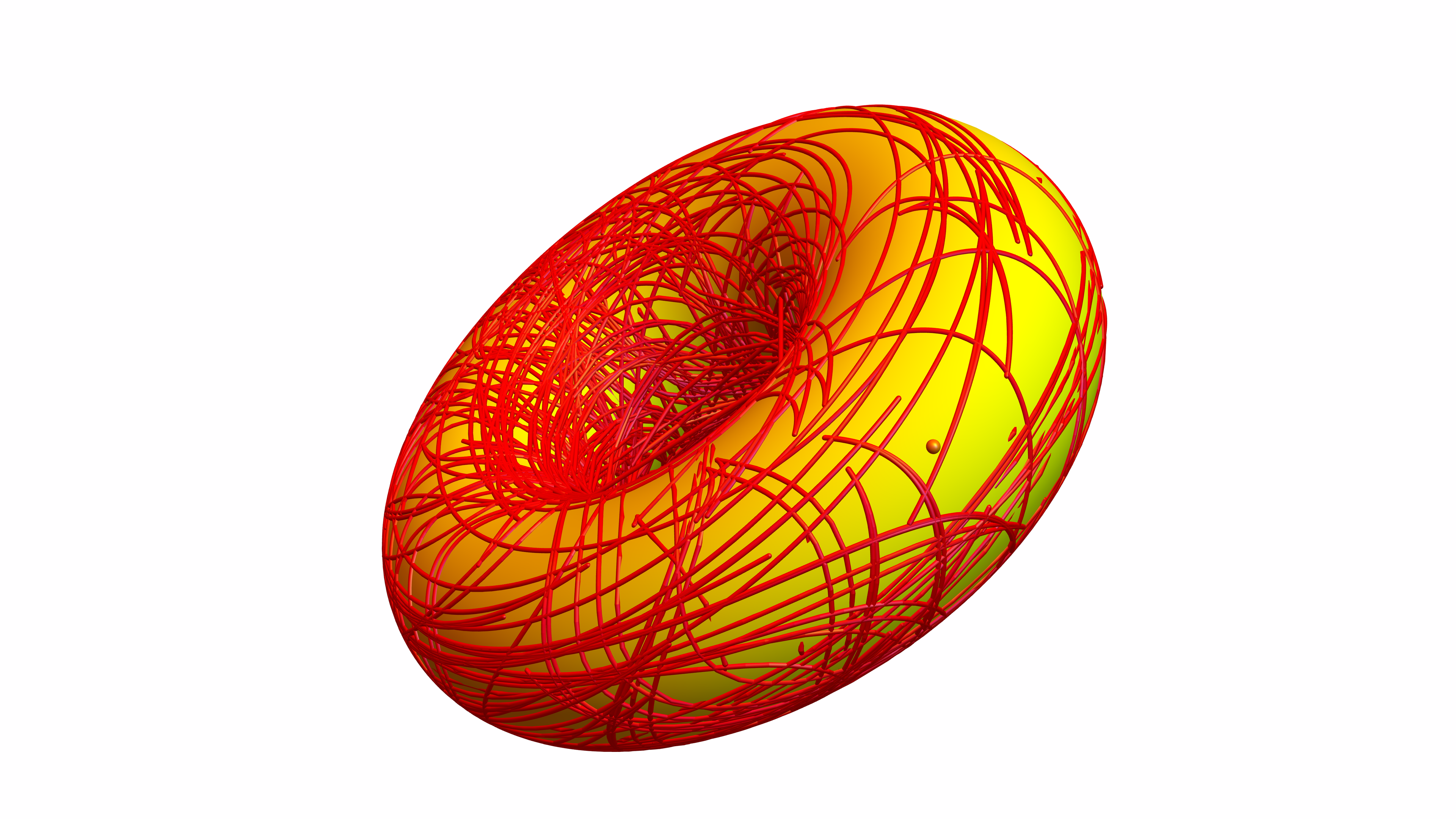}}
\label{Pentagon}
\caption{
Wave fronts on a non-flat torus $(M,g)$, where $g$ is a smooth Riemannian metric. 
The geodesic differential equations define a flow on the unit tangent bundle of 
$N=M \times \mathbb{T}^1$. This system preserves the volume and is in general expected to show
sensitive dependence on initial conditions on some part of $N$. In such a
case, we would expect the wave front length to grow exponentially.
}
\end{figure}  

\section*{Method}

\paragraph{}
All illustrations and experiments were done with the computer algebra
system Mathematica. Computer code for the illustrations is available
online on the Wolfram blog \cite{DensityWaveFrontsMathematica}.

\section*{Declarations}

\paragraph{}
The authors did not receive support from any organization for the submitted work.
No funding was received to assist with the preparation of this manuscript.
No funding was received for conducting this study.
No funds, grants, or other support was received.

\paragraph{}
All authors whose names appear on the submission
made substantial contributions to the conception
or design of the work.

\bibliographystyle{plain}

\end{document}